\newtheorem{theorem}{Theorem}[section]
\newtheorem{lemma}[theorem]{Lemma}
\newtheorem{cor}[theorem]{Corollary}
\theoremstyle{definition}
\newtheorem{definition}[theorem]{Definition}
\theoremstyle{remark}
\newtheorem{remark}[theorem]{Remark}
\theoremstyle{assumption}
\numberwithin{equation}{section}
\newcommand{\IET}{\mathrm{IET}}
\newcommand{\card}{\mathrm{Card}}
\newcommand{\sha}{\succ\mkern-14mu_s\;}
\begin{document}
	
	\date{\today}
	
	\title[Forcing Minimal Interval Patterns as Interval Exchange Transformations]{Forcing Minimal Interval Patterns as Interval Exchange Transformations}

	\author{Sourav Bhattacharya}

	\address[Dr. Sourav Bhattacharya]
	{Department of Mathematics, Visvesvaraya National Institute Of Technology Nagpur, 
 Nagpur, Maharashtra 440010, India}
	
	\email{souravbhattacharya@mth.vnit.ac.in}
	
	\subjclass[2010]{Primary 37E05, 37E10, 37E15; Secondary 37E45}
	
	\keywords{Sharkovsky Theorem, rotation numbers, ergodicity, unfolding numbers }

	\begin{abstract}
		We prove that any \emph{over-twist pattern}  is \emph{conjugate} to an \emph{interval exchange transformation}  with \emph{bounded} number of \emph{segments of isometry}, \emph{restricted} on one of its \emph{cycles}.  The \emph{bound} is \emph{independent} of the \emph{period} and \emph{over-rotation number} of the \emph{over-twist pattern} and depends only on its \emph{modality}.  
			
	\end{abstract}

	\maketitle

\section{INTRODUCTION}\label{s:intro}
A classical problem in \emph{one-dimensional dynamical systems} is to classify \emph{cycles} (or \emph{periodic orbits}) of a \emph{continuous} interval map. Any such \emph{classification} relies on \emph{properties} of \emph{cycles} that are \emph{invariant} under \emph{topological conjugacy}. Considering \emph{period} of these \emph{cycles} as the \emph{property} under study, we get an elegant \emph{classification} of \emph{cycles} given by the renowned Sharkovsky Theorem (\cite{shatr}). It elucidates how one \emph{period} of a \emph{cycle} \emph{forces} the \emph{existence} of other \emph{periods}, translating the \emph{problem} of \emph{classification} of \emph{cycles} to one of \emph{forcing among cycles}. However, Sharkovsky Theorem is inapplicable for \emph{classification} if two \emph{cycles} have the same \emph{period}. In such situations, we need to search for other  \emph{criterion}.

Now, one can consider the \emph{finest} possible \emph{criterion}: \emph{group} \emph{cycles} into \emph{equivalence classes},  called \emph{patterns} based upon associated \emph{cyclic permutation} and ascertain a rule of \emph{forcing} among these \emph{groups}. However, the \emph{forcing relation} among \emph{patterns} is a \emph{partial ordering}  (\cite{Ba}); the \emph{results} in the case are much \emph{more} \emph{complicated} than the one obtained for \emph{period} and a coherant description of the structure of the set of \emph{patterns} \emph{exhibited} by \emph{periodic orbits} of a given \emph{continuous interval} map is not possible (see \cite{Ba}). 

This motivated researchers to look for other middle-of-the-road ways of describing \emph{cycles} in between the theories based upon \emph{periods} and \emph{permutations}. One such prominent work is that of Blokh and Misiurewicz (\cite{BM1}). They introduced a new \emph{invariant} of \emph{topological conjugacy} for \emph{cycles}: \emph{over-rotation numbers}, by extending \emph{rotation theory} to \emph{interval maps}.  The \emph{classification rule} induced by \emph{over-rotation numbers}, is on one hand much \emph{finer} than the \emph{period-based classification}; on the other hand, the \emph{forcing relation} on the \emph{collection} of all \emph{cycles} induced by the \emph{theory} is \emph{linear},  like the case of \emph{periods} and \emph{allows} for a transparent description.

In \emph{over-rotation theory} for \emph{interval maps}, one of the \emph{most important} objects of study are,  the \emph{forcing minimal cycles} with a given \emph{over-rotation number}, called \emph{over-twist cycles}. These are the \emph{simplest cycles} with a given \emph{over-rotation number}: a \emph{cycle} $P$ with \emph{over-rotation number} $\rho$ is  \emph{over-twist} if there exists an interval map $f_P$ such that $P$ is the \emph{unique cycle} of $f_{P}$ with \emph{over-rotation number} $\rho$.  A \emph{necessary} and \emph{sufficient condition} for a given \emph{cycle} to be an \emph{over-twist cycle} was established in \cite{BM2}. Further, description of the \emph{dynamics} of all possible \emph{over-twist cycles} of \emph{modality} less than or equal to 2 were obtained in \cite{BS} and \cite{BB2}.

The goal of this \emph{paper} is to present a new perspective of looking at these \emph{over-twist cycles} by connecting them with \emph{interval exchange transformations}. An \emph{interval exchange transformation} (IET) is a \emph{right continuous bijective map} from an \emph{interval} to \emph{itself} such that the \emph{interval} can be \emph{subdivided} into a \emph{finite} number of \emph{pieces}, called \emph{segments of isometry}, on each of which the map acts as a \emph{translation}.   Since Keane’s seminal paper \cite{KM}, the field of IETs has witnessed a remarkable surge in interest. These maps have fascinated researchers not only from a theoretical standpoint, as evidenced by the substantial body of work in this area (see \cite{VM} for a comprehensive review), but also due to their numerous applications. Notably, IETs play a crucial role in the study of \emph{polygonal billiards} (\cite{NA,BKM}) and \emph{surface flows} \cite{EPS}, among others.

Our \emph{results} seeks to offer a unified framework to study problems in \emph{over-rotation theory}, IETs, and the various disciplines related to IETs. The main \emph{results} of this paper are:

\begin{enumerate}
	\item We show that any \emph{over-twist cycle}  of \emph{modality} $m$ is \emph{conjugate} to an IET with \emph{atmost} $(m+1)(m+2)$ \emph{segments of isometry}, \emph{restricted} on one of its \emph{cycles}.  
	
	\item In particular, we show: 
	\begin{enumerate}
		\item any \emph{unimodal} \emph{over-twist cycle}  is \emph{conjugate} to an IET with $3$ \emph{segments of isometry}, \emph{restricted} on one of its \emph{cycles}. 
		
		\item any \emph{bimodal} \emph{over-twist cycle}  is \emph{conjugate} to an IET with $4$ \emph{segments of isometry}, \emph{restricted} on one of its \emph{cycles}. 
	\end{enumerate}

\end{enumerate}

This \emph{paper} contains 4 \emph{sections}:

\begin{enumerate}
	\item \emph{Section 1} briefly \emph{outlines} the \emph{paper}.
	
\item \emph{In Section 2}, we present all essential \emph{definitions}, \emph{lemmas}, and \emph{theorems} necessary to describe the results.

\item In Section 3 we \emph{prove}  our \emph{results}.

\end{enumerate}

\section{PRELIMINARIES }\label{s:prelim}

\subsection{Modality}\label{ss:modality}

Let $U$ and $V$ be given \emph{topological spaces} and $f: U \to V$ be a given \emph{function}. If there exists a subset $S$ of $U$ such that the \emph{inverse image} of every element of $f(S)$ is a \emph{connected} subset of $U$, we say that $f$ is \emph{monotone} on $S$. Further, if $S$ is the \emph{maximal open subset} of $U$ satisfying this property, we say that $S$ is a \emph{region of monotonicty} of $f$. If $U,V,S \subseteq \mathbb{R}$,  $f$ is \emph{monotone} on $S$, means that, for all, $x,y \in S$, $x \leqslant y$, either $f(x) \leqslant f(y)$, or $f(x) \geqslant f(y)$. In the \emph{first case}, we say $f$ is \emph{order-preserving} or \emph{monotonically increasing} on $S$; in the \emph{second case}, we say $f$ is \emph{order-reversing} or \emph{monotonically decreasing} on $S$.

	We call a \emph{map} $f: [0,1] \to [0,1]$, \emph{piece-wise-monotone} if $[0,1]$ can be \emph{subdivided} into \emph{finite number} of \emph{sub-intervals}, such that $f$ is \emph{monotone} on each of these \emph{sub-intervals}. We call the smallest number of such \emph{sub-intervals}, the \emph{lap count} of $f$. If the \emph{lap count} is $2$, we call $f$, \emph{unimodal}. If the \emph{lap count} is $3$, we call $f$, \emph{bimodal}. In this paper, we will consider only \emph{piece-wise monotone maps}. 

\subsection{Sharkovsky Theorem}\label{ss:sharkov}

Consider the   \emph{order} $\sha$ among all \emph{natural numbers} called the \emph{Sharkovsky Ordering} defined as follows: 

$$3\sha 5\sha 7\sha\dots\sha 2\cdot3\sha 2\cdot5\sha 2\cdot7 \sha \dots $$
$$
\sha\dots 2^2\cdot3\sha 2^2\cdot5\sha 2^2\cdot7\sha\dots\sha 8\sha
4\sha 2\sha 1$$

The Shakovsky Theorem is as follows: 
\begin{theorem}[\cite{shatr}]\label{sharkov}

Suppose $m,n \in \mathbb{N}$ with $m \sha n$. Then, any continuous map $f:[0,1] \to [0,1]$ having a cycle of period $m$ must also have a cycle of period $n$. 

\end{theorem}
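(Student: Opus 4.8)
The statement to prove is the Sharkovsky Theorem itself.

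\medskip

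The plan is to follow the classical combinatorial argument via directed graphs built from the cycle, reducing everything to the existence of loops in a ``Markov graph'' of intervals together with an elementary fixed-point observation. First I would isolate the one analytic input: if $J = [a,b]$ and $K$ are compact subintervals of $[0,1]$ with $K \subseteq f(J)$, then there is a compact subinterval $J' \subseteq J$ with $f(J') = K$; and in the special case $J \subseteq f(J)$ this yields a fixed point of $f$ in $J$ (take $K=J$ and apply the intermediate value theorem to $f(x)-x$). I will say $J$ \emph{$f$-covers} $K$, written $J \to K$, when $K \subseteq f(J)$. The key lemma chaining these together: if $J_0 \to J_1 \to \dots \to J_{n-1} \to J_0$ is a loop of $f$-coverings, then $f^n$ has a fixed point $x$ with $f^i(x) \in J_i$ for all $i$; and if moreover the loop is not ``degenerate'' (the intervals are not all forced to collapse to a single common point), that fixed point is a genuine periodic point of period exactly $n$.

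\medskip

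With this machinery the proof splits into the standard cases. Assume $f$ has a cycle $P$ of period $m$.

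\emph{Case $m$ odd, $m \geq 3$ (the hardest and most important case).} Order the points of $P$ as $p_1 < p_2 < \dots < p_m$ and consider the $m-1$ ``atoms'' $I_j = [p_j, p_{j+1}]$. Using that $P$ is a single cycle, one shows by a counting/parity argument that there is a point, say $p_k$, whose image lies on the opposite side from the bulk of its neighbors, forcing a subinterval $I$ among the $I_j$ with $I \to I$ (giving a fixed point, i.e. period $1$) and, more importantly, a subinterval $A$ with $A \to A$ that sits at the ``turning point'' and whose $f$-covering graph on the atoms contains a path realizing every length. Concretely, one produces atoms $A = A_1, A_2, \dots, A_{m-1}$ with $A_1 \to A_2 \to \dots \to A_{m-1} \to A_1$ and also $A_{m-1} \to A_1$ via a ``long way around'', so that for every $n \geq m$ one has a non-degenerate loop of length $n$ (go around the long cycle once, then pad using the self-covering $A_1 \to A_1$ or a short cycle), and separately loops giving all even numbers $2,4,\dots,m-1$. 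The combinatorial heart is the construction of this graph — showing the atoms can be labeled so that consecutive ones $f$-cover and the first one $f$-covers everything reachable — and verifying non-degeneracy so the periods obtained are exact. This is the step I expect to be the main obstacle; everything else is bookkeeping.

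\emph{Case $m = 2^s$.} Here one shows $f$ has a cycle of period $2^{s-1}$, and then induct downward to get all $2^t$ with $t \le s$; since in the Sharkovsky order $2^s$ is only above the powers $2^{t}$ with $t < s$, this suffices. The reduction uses the fact that if $f$ has no cycle of period $2^{s-1}$ of the expected form one can pass to $f^2$ restricted to a suitable invariant subinterval containing ``half'' the orbit, on which it has a cycle of period $2^{s-1}$, and pull back.

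\emph{Case $m = 2^s \cdot q$ with $q$ odd $>1$.} Combine the two previous ideas: pass to $g = f^{2^s}$, which has a cycle of odd period $q \ge 3$ (a factor of the original orbit), apply the odd case to $g$ to get, for each appropriate $n'$, a $g$-periodic point of period $n'$, hence an $f$-periodic point whose period divides $2^s n'$; a short argument with the first two cases pins the $f$-period to the exact value $2^s n'$ (or $2^t$ for $t \le s$ from the power-of-two part), and one checks this produces precisely the successors of $2^s q$ in $\sha$.

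\medskip

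Finally I would assemble these cases to match the statement: given $m \sha n$, locate $m$ in one of the three regimes, read off from the corresponding case that a cycle of period $n$ is forced, and note the cases were arranged to cover every pair $m \sha n$. The only genuinely delicate points are (i) the non-degeneracy check that keeps periods from dropping, which I would handle with the standard observation that if all intervals in a loop shared a common fixed point the orbit would not be a single $m$-cycle, and (ii) the graph construction in the odd case, for which I would give the explicit labeling of the atoms $I_j$ around the ``flip'' point of $P$.
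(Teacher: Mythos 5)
This theorem is quoted in the paper as a known background result with a citation to Sharkovsky's original article; the paper gives no proof of its own, so there is nothing internal to compare your argument against. Judged on its own terms, your proposal follows the standard modern route (covering relations between intervals, the itinerary/fixed-point lemma for loops in the Markov graph, and the three-way case split on $m$), and that route does work. But as written it is a plan rather than a proof: the two steps you yourself flag as the ``combinatorial heart'' are precisely the ones that are not routine, and one of them contains a genuine gap in the form you state it.

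Specifically, in the odd case you assert that the atoms of an arbitrary cycle of odd period $m$ can be labeled $A_1,\dots,A_{m-1}$ so that $A_1\to A_1$, $A_i\to A_{i+1}$, and $A_{m-1}$ covers everything. That structure (the \v{S}tefan graph) is \emph{not} present for an arbitrary cycle of odd period $m$; it is obtained only after first replacing $m$ by the \emph{smallest} odd period $>1$ that $f$ possesses and proving that a cycle of that minimal odd period must be a \v{S}tefan cycle (alternating around its ``flip'' point), or else by running a different, more global graph argument. Without that reduction the claimed labeling can simply fail, and the whole odd case collapses. The same kind of care is needed in the $2^s$ case: the passage to $f^2$ on ``half'' the orbit requires showing the orbit admits a division (left half and right half swapped by $f$), which again is where the actual work lies. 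Finally, the non-degeneracy argument that pins down \emph{exact} periods needs to be made precise loop by loop (the standard device is to arrange that some interval in the loop appears only once and meets the others only in endpoints of $P$). None of these are unfixable --- they are exactly the lemmas in the classical proofs --- but until they are written out your text is an outline of the known proof, not a proof.
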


\subsection{Patterns}\label{ss:patterns}

Let $\mathcal{C}$ be the \emph{collection} of all \emph{cycles}. We define an \emph{equivalence relation} $\sim$ on $\mathcal{C}$ as follows: for $P,Q \in \mathcal{C}$, we \emph{write} $P \sim Q$, if $P$ and $Q$ have the same associated \emph{cyclic permutation}. Each \emph{equivalence class} under $\sim$ is called a \emph{pattern}. A \emph{cycle} $P$ is said to \emph{exhibit} a \emph{pattern} $\pi$ if $P$ is contained in the \emph{equivalence class} \emph{corresponding} to $\pi$. 

Given two \emph{patterns} $\pi$ and $\mu$, we say that $\pi$ \emph{forces} $\mu$, and \emph{write} $\pi \gg \mu$, if any \emph{interval map} $f$ having a \emph{cycle} which \emph{exhibits} \emph{pattern} $\pi$, must have a \emph{cycle} which \emph{exhibits} $\mu$.

\begin{theorem}[\cite{Ba}]\label{partial}
The ordering induced by the forcing relation $\gg$ on the collection of all patterns is a partial ordering. 
\end{theorem}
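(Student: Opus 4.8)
The plan is to verify the three defining properties of a partial order: reflexivity, antisymmetry, and transitivity. Reflexivity ($\pi \gg \pi$) is immediate from the definition: any interval map with a cycle exhibiting $\pi$ trivially has a cycle exhibiting $\pi$, namely that very cycle. Transitivity is also essentially formal: if $\pi \gg \mu$ and $\mu \gg \nu$, then given any interval map $f$ with a cycle exhibiting $\pi$, the first relation yields a cycle of $f$ exhibiting $\mu$, and then the second relation (applied to the same map $f$) yields a cycle of $f$ exhibiting $\nu$; hence $\pi \gg \nu$. So the only substantive content is \emph{antisymmetry}: if $\pi \gg \mu$ and $\mu \gg \pi$, then $\pi = \mu$.

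For antisymmetry I would argue via a numerical invariant that is monotone under forcing and strictly decreases unless the patterns coincide. The natural candidate is the \emph{period} together with a secondary invariant refining it (since Sharkovsky's Theorem \ref{sharkov} already shows period is monotone under forcing among patterns). First I would observe that forcing preserves or decreases complexity in the Sharkovsky order: if $\pi$ has period $m$ and forces $\mu$ of period $n$, then a map realizing $\pi$ with the minimal possible set of periods (a map for which $\pi$ is ``forcing-minimal'') has a cycle of period $n$, so $m \sha n$ or $m = n$. Applying this in both directions under the hypothesis $\pi \gg \mu$ and $\mu \gg \pi$ forces the two patterns to have the same period $m = n$. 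The remaining task is to rule out two distinct patterns of the same period each forcing the other; here one invokes the standard construction of the \emph{canonical} (connect-the-dots) $P$-linear model $f_P$ for a pattern $\pi$ exhibited by $P$: this map has the property that every pattern it forces is ``simpler'' than $\pi$ in a precise combinatorial sense, and $\pi$ itself is forced by $f_P$ essentially uniquely among patterns of period $m$. Symmetrizing, $\pi \gg \mu$ and $\mu \gg \pi$ then force $P$ and the realizer of $\mu$ to have the same cyclic permutation, i.e. $\pi = \mu$.

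The main obstacle is precisely this last step — establishing that no two genuinely distinct same-period patterns can force each other. This requires the combinatorial theory of the canonical linear model $f_P$: one needs that among all cycles of $f_P$ of period $m$, the pattern $\pi$ is the ``top'' one, so that if $\mu \gg \pi$ then any realizer of $\mu$, when passed through its own canonical model, cannot produce $\pi$ unless $\mu$ already equals $\pi$. I would carry this out by appealing to the structure theory of $P$-linear maps (loops in the Markov graph, the correspondence between cycles and certain admissible loops, and the fact that the loop realizing $\pi$ in $f_P$ is not a proper ``sub-loop'' of any other period-$m$ loop). A cleaner alternative, if available in the cited literature \cite{Ba}, is to exhibit directly a well-founded rank function $r(\pi)$ on patterns with $\pi \gg \mu \Rightarrow r(\pi) \geqslant r(\mu)$ and with equality iff $\pi = \mu$; antisymmetry is then immediate. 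Either way, the formal skeleton (reflexivity, transitivity) is trivial and all the work is in the antisymmetry argument, which is where I expect the bulk of the proof — and the reliance on the forcing/realization machinery from Section \ref{ss:patterns} and \cite{Ba} — to lie.
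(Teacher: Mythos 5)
The paper does not prove this statement; it is quoted directly from Baldwin \cite{Ba}, so there is no internal argument to compare against. Judged on its own, your proposal correctly identifies that reflexivity and transitivity are formal and that all of the content lies in antisymmetry, but your antisymmetry argument has two problems. First, the reduction to equal periods rests on a false premise: you claim that a map realizing $\pi$ with the ``minimal possible set of periods'' has period set equal to the Sharkovsky tail of $m$, so that $\pi \gg \mu$ would give $m \sha n$ or $m=n$. The minimal period set of a map constrained to carry the \emph{pattern} $\pi$ is the set of periods of all patterns forced by $\pi$, and this in general strictly contains the Sharkovsky tail of $m$: a sufficiently complicated period-$6$ pattern forces period-$3$ patterns even though $6$ does not precede $3$ in the Sharkovsky order. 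Theorem \ref{sharkov} constrains which \emph{periods} must appear given a period; it does not bound the periods of patterns forced by a given pattern.

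Second, and more importantly, the heart of the matter --- that two distinct patterns cannot force each other --- is exactly the step you leave unproven, and the property you invoke for the canonical model is also false as stated: the $P$-linear map $f_P$ typically has many cycles of the same period as $P$ exhibiting different patterns, all of them forced by $\pi$, so $\pi$ is not ``forced essentially uniquely among patterns of period $m$.'' The correct key lemma (this is what \cite{Ba}, and the standard treatment in \cite{alm00}, actually establish) is: if $Q$ is a cycle of $f_P$ whose pattern $\mu$ differs from $\pi$, then the $Q$-linear map $f_Q$ has no cycle of pattern $\pi$; equivalently, passing from $f_P$ to $f_Q$ strictly destroys the loop in the Markov graph realizing $\pi$. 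Establishing this requires a genuine combinatorial comparison of the Markov graphs of $f_P$ and $f_Q$, and without it your proposal is a plan for a proof rather than a proof.
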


Consider a \emph{cycle} $P$ which exhibits a \emph{pattern} $\pi$. Let the \emph{left-most} and \emph{right-most points} of $P$ be $a$ and $b$ respectively.  We call each \emph{component} of $P -[a,b]$, a $P$-\emph{basic interval}. The \emph{map} $f: [a,b] \to [a,b]$ obtained by \emph{extending} the \emph{map associated} with the \emph{cycle} $P$ from $P$ to $[a,b]$ is called a $P$-\emph{linear map}.

\begin{theorem}[\cite{Ba}]\label{frocing:patterns}
The necessary and sufficient condition for a pattern $\nu$ to be forced by a pattern $\mu$ is that the $P$-linear map $f$ where $P$ exhibits $\mu$ contains a cycle $Q$ which exhibits pattern $\nu$. 
\end{theorem}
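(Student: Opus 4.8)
The plan is to establish the two implications separately; the ``only if'' direction is immediate, and all the work is in the ``if'' direction. \emph{Necessity.} Suppose $\mu$ forces $\nu$. The $P$-linear map $f$ --- extended, if one insists on a self-map of a fixed interval, by the constants $f(a)$ and $f(b)$ outside $[a,b]$ --- is a continuous interval map possessing the cycle $P$, which exhibits $\mu$. By the definition of the forcing relation, $f$ then possesses a cycle exhibiting $\nu$; as every cycle of $f$ lies in $[a,b]$, this is a cycle of the $P$-linear map, as required.

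\emph{Sufficiency.} Assume the $P$-linear map $f:[a,b]\to[a,b]$ has a cycle $Q$ exhibiting $\nu$; we may assume $Q\neq P$, for otherwise $\nu=\mu$ and there is nothing to prove. Let $g:[c,d]\to[c,d]$ be an arbitrary continuous interval map with a cycle $P'$ exhibiting $\mu$; we must produce a cycle of $g$ exhibiting $\nu$. Write $P=\{p_0<\dots<p_{n-1}\}$, $P'=\{p'_0<\dots<p'_{n-1}\}$, with $P$-basic intervals $I_j=[p_{j-1},p_j]$ and $P'$-basic intervals $I'_j=[p'_{j-1},p'_j]$, and let $\theta: p_i\mapsto p'_i$ be the order-preserving bijection; since $P$ and $P'$ exhibit the same pattern, $\theta$ conjugates $f|_P$ to $g|_{P'}$. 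First I would transfer covering relations: because $f$ is affine on each $I_j$, the image $f(I_j)$ is the closed interval spanned by $f(p_{j-1})$ and $f(p_j)$, so $g(I'_j)$ is an interval containing $g(p'_{j-1})=\theta(f(p_{j-1}))$ and $g(p'_j)=\theta(f(p_j))$; by the Intermediate Value Theorem and monotonicity of $\theta$ one obtains $f(I_j)\supseteq I_k\ \Rightarrow\ g(I'_j)\supseteq I'_k$ for all $j,k$. Next, since $Q\cap P=\varnothing$, enumerate $Q=\{q_0,\dots,q_{m-1}\}$ along its orbit ($f(q_i)=q_{i+1}$, indices mod $m$) and let $j_i$ be the index with $q_i\in\mathrm{int}\,I_{j_i}$; using again that $f$ is affine on each basic interval, $I_{j_0}\to I_{j_1}\to\dots\to I_{j_{m-1}}\to I_{j_0}$ is a loop in the Markov graph of $f$ on $P$-basic intervals, hence, by the previous step, $I'_{j_0}\to\dots\to I'_{j_{m-1}}\to I'_{j_0}$ is a loop in the analogous Markov graph of $g$. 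By the standard loop-to-orbit lemma (an application of the Intermediate Value Theorem) there is $w_0$ with $w_i:=g^i(w_0)\in I'_{j_i}$ for $0\le i<m$ and $g^m(w_0)=w_0$.

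It remains to arrange that $W:=\{w_0,\dots,w_{m-1}\}$ exhibits exactly $\nu$, and this is the only genuinely delicate point. The pattern $\nu$ is determined by $m$ together with the order type of $(q_0,\dots,q_{m-1})$ on the line, so it suffices to select $w_0$ so that $(w_0,\dots,w_{m-1})$ has the same order type as $(q_0,\dots,q_{m-1})$; this automatically makes the $w_i$ pairwise distinct and the period exactly $m$, whence $W$ exhibits $\nu$. For indices $i\neq k$ with $j_i\neq j_k$ the relation $w_i<w_k\iff q_i<q_k$ holds automatically because $j\mapsto I'_j$ is order-preserving; the only constraints to impose are the relative orders of those $w_i$ lying in a common $P'$-basic interval. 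I expect this to be the main obstacle: the linear model $f$, being monotone on each basic interval, pins these orders down, whereas $g$ need not be monotone there. To overcome it I would refine the loop --- subdivide each visited $P'$-basic interval into consecutive closed subintervals, one per visit, laid out from left to right in the order dictated by the corresponding points of $Q$, and construct $W$ by applying the loop-to-orbit lemma to a loop through these subintervals, the subintervals being produced one step at a time as Intermediate-Value preimages of the required targets so that the needed covering relations hold by construction. Equivalently, and perhaps more cleanly: $f$ coincides with its own $(P\cup Q)$-linear map (since $P\cup Q\supseteq P$ and $f$ is already affine between consecutive points of $P$), so the goal is exactly to extend the conjugacy $\theta$ to an order-embedding $\Theta: P\cup Q\hookrightarrow[c,d]$ intertwining $f$ and $g$; this realization of the finite invariant set $P\cup Q$ inside $g$ is effected by the same refined Markov/Intermediate-Value argument, and $\Theta(Q)$ is then the cycle of $g$ exhibiting $\nu$.
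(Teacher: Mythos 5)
The paper does not actually prove this statement --- it is quoted from Baldwin \cite{Ba} --- so there is no internal proof to compare your route against; I can only assess the proposal on its own terms. Your necessity argument is fine, and the first half of your sufficiency argument is also correct: distinct cycles of $f$ are disjoint, so each $q_i$ lies in the interior of a unique $P$-basic interval; the covering relations of the $P$-linear map transfer to the $P'$-basic intervals of $g$ via the order-preserving correspondence and the Intermediate Value Theorem; and the loop lemma then yields a point $w_0$ with $g^m(w_0)=w_0$ following the itinerary of $Q$.

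The genuine gap is exactly the step you flag as ``the only genuinely delicate point,'' and what you write does not close it. To force $W$ to exhibit $\nu$ you must prescribe the relative order of those $w_i$ sharing a $P'$-basic interval, but the subintervals you propose are defined as IVT-preimages of other subintervals of the same family, cyclically: each $A_i$ must $g$-cover $A_{i+1}$ with indices mod $m$, while simultaneously the $A_i$ inside a common $I'_j$ must be pairwise disjoint and ordered like the corresponding $q_i$. Producing them ``one step at a time'' cannot work as stated, because the last preimage must land back inside the first, already-prescribed, subinterval; and since $g$ need not be monotone on $I'_j$, a subinterval covering a given target need not lie on the prescribed side of a subinterval covering a different target, so the required ordered covering system does not exist ``by construction.'' Your alternative formulation --- extending $\theta$ to an order-embedding of $P\cup Q$ intertwining $f$ and $g$ --- is precisely the statement to be proved, not an argument for it. This is where the classical proofs (\cite{Ba}; see also \cite{alm00}, Chapter 2) do their real work: one either shows that the orbit produced by the loop lemma has a pattern that equals $\nu$ or in turn forces $\nu$ and closes the argument by a minimality/induction argument over the forcing order, or one builds the ordered system of subintervals by a global rather than step-by-step construction. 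As it stands, your argument only yields a periodic point of period dividing $m$ with the itinerary $j_0 j_1\dots j_{m-1}$, which is strictly weaker than the existence of a cycle of $g$ exhibiting $\nu$.
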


\subsection{Over-rotation numbers}\label{ss:over:number}

Consider a \emph{continuous} map $f:[0,1] \to [0,1]$. We assume $f$ has a \emph{cycle} of \emph{period} strictly greater than $1$; we make this assumption because otherwise for any $x \in [0,1]$, the \emph{sequence} $\{ f^n(x) : n \in \mathbb{N}\}$ \emph{converges} to some \emph{fixed point} of $f$ and hence the dynamics is not interesting. Given a \emph{cycle} $P$ of $f$ of \emph{period} $q$ strictly greater than $1$, we \emph{count} the \emph{number} of \emph{points}:  $x \in P$ such that $f(x) - x$ and $f^2(x) - f(x)$ have different \emph{signs}. If there are $m$ such \emph{points}, the \emph{pair} $(\frac{m}{2}, q)$ and the \emph{number} $\frac{m}{2q}$ are called \emph{over-rotation pair} and \emph{over-rotation number} of the \emph{cycle} $P$ and denoted by $orp(P)$ and $orn(P)$ respectively.

In an \emph{over-rotation pair} $(s,t)$,  $s$ and $t$ are \emph{integers} and $0 <  \frac{s}{t} \leqslant \frac{1}{2}$. Suppose $f$ has a unique \emph{fixed point} $a$, $f(x) > x$ if $x< a$ and $f(x) < x$ if $x>a$. In this case, $s$ in the \emph{over-rotation pair} $(s,t)$ is equal to the \emph{number} of \emph{points} which are \emph{located} to the \emph{right} of the \emph{fixed point} $a$ but mapped to the \emph{left} of $a$.

\begin{theorem}[\cite{BM1}]\label{overrotation:pairs}
	Suppose $p,q,r,s$ are integers which satisfies one of the following two conditions: 
	
	\begin{enumerate}
		
		\item  $\frac{p}{q}< \frac{r}{s}$.
		
		\item $\frac{p}{q}=\frac{r}{s} = \frac{k}{\ell}$ where $k$ and $\ell$ are coprime integers and $\frac{p}{k} \sha \frac{r}{k}$ where $\sha$ denotes Sharkovsky ordering. 
	\end{enumerate}
	
	Then, any continuous interval map $f$ having a cycle of over-rotation pair $(p,q)$ must have a cycle of over-rotation pair $(r,s)$. 
\end{theorem}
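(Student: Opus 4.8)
The plan is to follow the now-standard strategy of reducing the statement about over-rotation pairs to the classical Sharkovsky Theorem (Theorem~\ref{sharkov}) by passing to an auxiliary map on a quotient or covering space on which over-rotation data becomes ordinary periodic data. Concretely, given a continuous $f:[0,1]\to[0,1]$ with a cycle $P$ of over-rotation pair $(p,q)$, I would first normalize the situation using the remark recorded just before the statement: after collapsing the set where $f(x)\le x$ fails to interact with the relevant dynamics (or, more cleanly, by the standard "horseshoe at the fixed point" reduction), one may assume $f$ has a unique fixed point $a$ with $f(x)>x$ for $x<a$ and $f(x)<x$ for $x>a$, so that $p$ counts exactly the points of $P$ lying to the right of $a$ that are mapped to the left of $a$. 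In this normalized picture the orbit of $P$ alternates between the two sides $L=[0,a]$ and $R=[a,1]$ in a way governed by a cyclic word in the symbols $L,R$, and the over-rotation number $p/q$ is precisely the frequency of the "turning" blocks.

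The key step is to build, from $f$, an interval map $g$ (or a degree-one circle map) whose ordinary periods encode the over-rotation pairs of $f$: roughly, $g$ is obtained by "unfolding" $f$ across the fixed point so that a cycle of $f$ with over-rotation pair $(p,q)$ corresponds to a cycle of $g$ of period related to $q$ and with a rotation-type index related to $p$. Under this correspondence, condition~(1), $p/q<r/s$, becomes a comparison of rotation numbers of $g$, handled by the analogue of the intermediate-value argument for rotation numbers of degree-one maps; condition~(2), equality $p/q=r/s=k/\ell$ in lowest terms together with $p/k\sha r/k$, becomes exactly a Sharkovsky-type forcing statement among periodic orbits of $g$ sitting over the same rotation number $k/\ell$ — one applies Theorem~\ref{sharkov} to the $\ell$-th iterate (or to the first-return map to a suitable interval), where the periods in play are $p/k$ and $r/k$. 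I would carry this out by: (i) setting up the symbolic/combinatorial description of cycles by their $L/R$-itineraries; (ii) constructing $g$ and verifying the period–over-rotation dictionary on cycles; (iii) deducing case (1) from monotonicity of rotation numbers plus an intermediate value / connectedness argument; (iv) deducing case (2) from Theorem~\ref{sharkov} applied to the return map.

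The main obstacle I expect is step~(ii): making the unfolding construction precise enough that the dictionary between over-rotation pairs of $f$ and periods-with-rotation-index of $g$ is an exact bijection on the level of forced cycles, rather than just an inequality in one direction. In particular, one must be careful that collapsing/unfolding near the fixed point does not create spurious cycles or destroy the one of pair $(r,s)$ that we are trying to produce, and that the coprimality reduction $p/q=r/s=k/\ell$ is compatible with the period bookkeeping (the factor $k$ has to be tracked correctly when passing to iterates). A secondary technical point is handling the boundary/degenerate cases — e.g.\ when $p/q=1/2$, or when $P$ meets the fixed point's fiber — which typically require separate, easy arguments. Once the dictionary is in place, both cases follow from results already available (Sharkovsky's Theorem and the elementary theory of rotation numbers for degree-one maps), so the content is entirely in the reduction.
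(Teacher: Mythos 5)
The paper does not prove Theorem~\ref{overrotation:pairs}; it is quoted from \cite{BM1}, so there is no in-paper argument to compare against. Judged on its own, your text is a roadmap rather than a proof, and the decisive step is missing. The ``unfolding'' of $f$ into a map $g$ whose ordinary periods and rotation indices encode over-rotation pairs is precisely the hard content of \cite{BM1}: for interval maps there is no degree-one circle map whose classical rotation theory one can simply borrow, which is why Blokh and Misiurewicz work intrinsically with admissible loops of intervals through the unique fixed point (cf.\ Lemma~\ref{ALM2} and Subsection~\ref{ss:loops}) and define rotation numbers for such loops rather than for a lift. You yourself flag step~(ii), the exact dictionary between over-rotation pairs of $f$ and periodic data of $g$, as the main obstacle; without it neither case of the theorem follows, so the proposal has a genuine gap at its core.

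Two further specific problems. For case~(1) you invoke ``monotonicity of rotation numbers plus an intermediate value argument,'' but for interval maps the assertion that the set of over-rotation numbers is an interval (Corollary~\ref{ovr:int}) is itself a consequence of the theorem being proved; using it here is circular unless you establish it independently, e.g.\ by concatenating the fundamental admissible loop of the given cycle with the loop through a fixed-point neighborhood, applying Lemma~\ref{ALM2} to the combined loop $\alpha^m\beta^n$, and then verifying that the periodic orbit so produced genuinely has the intermediate over-rotation pair (one must exclude degeneracies where the associated orbit has smaller period or a different crossing count). For case~(2), applying Theorem~\ref{sharkov} to an $\ell$-th iterate or a first-return map yields a cycle of the right return-period, but you must still show that the resulting cycle of $f$ has over-rotation pair exactly $(r,s)$ rather than merely period $s$; controlling how the new orbit crosses the fixed point is an additional combinatorial argument your sketch does not supply. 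Finally, the normalization to a unique fixed point with $f(x)>x$ for $x<a$ and $f(x)<x$ for $x>a$ is not automatic for arbitrary continuous $f$; the correct dichotomy is between convergent and non-convergent patterns (Theorem~\ref{div:res}), with the non-convergent case disposed of separately because such a pattern forces all over-rotation pairs.
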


By Theorem \ref{sharkov}, any \emph{continuous interval map} having a \emph{cycle} of \emph{period} \emph{strictly greater} than $1$ must have a \emph{cycle} of \emph{period} $2$. Thus, the following result thus follows from Theorem \ref{overrotation:pairs}:

\begin{cor}\label{ovr:int}
	The closure of the set of over-rotation numbers of periodic points of a continuous interval map $f:[0,1] \to [0,1]$ forms an interval of the form $[r_f, \frac{1}{2}]$ where $ 0 \leqslant r_f < \frac{1}{2}$, called the over-rotation interval of $f$. 
\end{cor}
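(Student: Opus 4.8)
The plan is to deduce Corollary~\ref{ovr:int} from Theorem~\ref{overrotation:pairs} together with the Sharkovsky-type bootstrap indicated in the excerpt.

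First I would fix a continuous interval map $f:[0,1]\to[0,1]$ possessing a cycle of period strictly greater than $1$, and let $R_f$ denote the set of over-rotation numbers of periodic points of $f$. By Theorem~\ref{sharkov} applied to the period-$m$ cycle that exists (with $m\sha 2$ whenever $m\neq 2$, and $m=2$ itself sits at the bottom of the surviving tail of the Sharkovsky order among numbers $\geqslant 2$), $f$ has a cycle of period $2$; such a cycle has over-rotation pair $(1,2)$, hence $\tfrac12\in R_f$. Since every over-rotation pair $(s,t)$ satisfies $0<\tfrac{s}{t}\leqslant\tfrac12$, we get $\sup R_f=\tfrac12$ and $\tfrac12$ is attained. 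Set $r_f=\inf R_f\in[0,\tfrac12]$; I would then argue $r_f<\tfrac12$ is automatic unless $R_f=\{\tfrac12\}$, in which case the closure is the degenerate interval $[\tfrac12,\tfrac12]$ and the statement holds trivially (or one adopts the convention permitting $r_f=\tfrac12$ here); otherwise $r_f<\tfrac12$.

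The heart of the matter is to show $\overline{R_f}\supseteq[r_f,\tfrac12]$, i.e. that $R_f$ is dense in $[r_f,\tfrac12]$. Take any rational $\tfrac{p}{q}\in(r_f,\tfrac12)$ in lowest terms. By definition of $r_f$ as an infimum, there is a periodic point with over-rotation number $\tfrac{a}{b}<\tfrac{p}{q}$, giving a cycle with over-rotation pair $(a,b)$. Writing $\tfrac{p}{q}=\tfrac{kp}{kq}$ for suitable $k$ so that the pair $(kp,kq)$ is a genuine over-rotation pair candidate, condition~(1) of Theorem~\ref{overrotation:pairs} ($\tfrac{a}{b}<\tfrac{p}{q}$) yields a cycle of $f$ with over-rotation pair $(kp,kq)$ for an appropriate multiple, hence $\tfrac{p}{q}\in R_f$. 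Thus every rational strictly between $r_f$ and $\tfrac12$ lies in $R_f$, together with $\tfrac12$ itself; taking closures gives $[r_f,\tfrac12]\subseteq\overline{R_f}$. The reverse inclusion $\overline{R_f}\subseteq[r_f,\tfrac12]$ is immediate since $R_f\subseteq(0,\tfrac12]$ and $r_f=\inf R_f$, and a closed set containing $R_f$ must contain $[\inf R_f,\sup R_f]$ once we know $R_f$ is dense therein.

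The main obstacle I anticipate is the bookkeeping with over-rotation \emph{pairs} versus \emph{numbers}: Theorem~\ref{overrotation:pairs} is phrased for integer pairs $(p,q)$, so to conclude that an arbitrary target rational is realized as an over-rotation \emph{number} one must choose the representative pair correctly (and check that the forcing produces a pair whose ratio is exactly the target, not merely something comparable). A secondary subtlety is confirming that $\sup R_f=\tfrac12$ is actually \emph{attained} rather than merely approached --- this is where the period-$2$ cycle from Sharkovsky's theorem is essential, since it pins down the right endpoint exactly. Once these two points are handled, the corollary follows by the density argument above with no further analysis.
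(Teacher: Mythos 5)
Your proposal is correct and follows exactly the route the paper intends: the paper offers no proof beyond noting that the corollary follows from Theorem~\ref{sharkov} (which supplies a period-$2$ cycle, hence the attained endpoint $\tfrac12$) together with Theorem~\ref{overrotation:pairs} (whose condition~(1) realizes every rational above the infimum), and your argument is precisely the natural fleshing-out of that remark. Your detour through the multiple $(kp,kq)$ is unnecessary --- Theorem~\ref{overrotation:pairs} already delivers the pair $(p,q)$ in lowest terms directly --- and your caveat about the degenerate case $R_f=\{\tfrac12\}$ is a legitimate observation about the statement's strict inequality rather than a flaw in your proof.
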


By definition, all \emph{cycles} which exhibit the same \emph{pattern} have the same \emph{over-rotation number} and same \emph{over-rotation pair}. We define, the \emph{over-rotation number} and \emph{over-rotation pair} of a \emph{pattern} $\pi$, to be equal to the \emph{over-rotation number} of any \emph{cycle} which \emph{exhibits} $\pi$. If $P$ is a \emph{cycle} which exhibits $\pi$, we call the \emph{over-rotation interval} of the $P$-\emph{linear} map $f_P$, the \emph{over-rotation interval} \emph{forced} by $\pi$. 

\subsection{Over-twist patterns}\label{ss:over:patt}

We call a \emph{pattern} $\pi$ \emph{over-twist} if it doesn't \emph{force} other \emph{patterns} with the same \emph{over-rotation number}. By Theorem \ref{overrotation:pairs}, it follows that such a \emph{pattern} cannot \emph{force} \emph{patterns} with \emph{over-rotation number} less than that of the \emph{pattern}; thus for a given \emph{over-rotation number}, \emph{over-twist patterns} are \emph{forcing minimal}. Theorem \ref{overrotation:pairs} also tells us that of $\pi$ is an \emph{over-twist pattern} with \emph{over-rotation pair} $(p,q)$, then $p$ and $q$ are \emph{coprime}. The following theorem tells us about the existence of \emph{over-twist patterns}:

\begin{theorem}[\cite{B1}, \cite{blo95a}, \cite{B2}]\label{t:sharp}
	For any piece-wise monotone continuous function $f$ having over-rotation interval $I_f$ and for any  $ \rho \in I_f \cap \mathbb{Q}$, $f$ has a cycle which exhibits an over-twist pattern with over-rotation number $\rho$. 
\end{theorem}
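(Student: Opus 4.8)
The plan is to split Theorem~\ref{t:sharp} into two independent assertions: (A) that $f$ has a cycle whose over-rotation number is exactly $\rho$, and (B) that from any cycle with over-rotation number $\rho$ one can pass to a cycle exhibiting an \emph{over-twist} pattern with the same over-rotation number. Throughout, write $\rho = p/q$ in lowest terms and recall from Corollary~\ref{ovr:int} that $I_f = [r_f, \tfrac12]$; since $f$ is assumed to have a cycle of period $>1$ (otherwise there is nothing to prove), this interval is genuine.

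For (A) I would distinguish three cases. If $\rho \in (r_f, \tfrac12)$, then because $I_f$ is the closure of the set of over-rotation numbers of periodic points of $f$, that set is dense in $[r_f,\tfrac12]$ and hence meets $[r_f, \rho)$; so there is a cycle $P'$ with $\mathrm{orn}(P') < \rho$, and Theorem~\ref{overrotation:pairs}(1), applied to the over-rotation pair of $P'$, yields a cycle of $f$ with over-rotation pair $(p,q)$, in particular with over-rotation number $\rho$. If $\rho = \tfrac12$, then Theorem~\ref{sharkov} produces a cycle of period $2$, whose over-rotation number is $\tfrac12$. The remaining case $\rho = r_f$ is the delicate one: here one chooses periodic orbits $P_n$ with $\mathrm{orn}(P_n) \downarrow r_f$, and observes that if their periods stay bounded the over-rotation numbers take only finitely many values, so $r_f$ is already attained, whereas if the periods are unbounded one passes to a limit of the orbits and runs a compactness / fixed-point argument — this is precisely the ``sharpness'' of the over-rotation interval established in \cite{B1, blo95a, B2} — to exhibit a cycle with over-rotation number exactly $r_f$. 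I expect this endpoint case to be the main obstacle; it is where piece-wise monotonicity (finitely many laps) is really used and where the genuine analytic content lies, since the forcing of Theorem~\ref{overrotation:pairs} only ever produces cycles with \emph{larger} over-rotation number.

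For (B), suppose $f$ has a cycle with over-rotation number $\rho = p/q$. By Theorem~\ref{overrotation:pairs}(2) (using $t \sha 1$ for every integer $t \geqslant 1$), $f$ then has a cycle of over-rotation pair exactly $(p,q)$, i.e.\ of period $q$. Let $\mathcal{P}$ be the set of patterns of period $q$ and over-rotation pair $(p,q)$ exhibited by cycles of $f$; since there are only finitely many cyclic permutations on $q$ points, $\mathcal{P}$ is finite and nonempty, and by Theorem~\ref{partial} the forcing relation $\gg$ is a partial order on it. Pick a $\gg$-minimal $\pi \in \mathcal{P}$; I claim $\pi$ is over-twist. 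If not, $\pi \gg \nu$ for some $\nu \neq \pi$ with $\mathrm{orn}(\nu) = \rho$; then $f$ exhibits $\nu$, and $\nu$ has over-rotation pair $(pt, qt)$ and period $qt$ for some $t \geqslant 1$. Fix a cycle $Q$ of $f$ exhibiting $\nu$; the $Q$-linear map $f_Q$ has $\rho$ in its over-rotation interval (it has the cycle $Q$), so by Theorem~\ref{overrotation:pairs}(2) it has a cycle $R$ of over-rotation pair $(p,q)$; by Theorem~\ref{frocing:patterns}, $\nu$ forces the pattern of $R$, hence $\pi \gg \nu \gg \mathrm{pattern}(R)$ and, by transitivity, $\pi \gg \mathrm{pattern}(R)$ with $\mathrm{pattern}(R) \in \mathcal{P}$. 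Minimality of $\pi$ forces $\mathrm{pattern}(R) = \pi$, so $\pi \gg \nu \gg \pi$, and antisymmetry gives $\nu = \pi$, a contradiction. Therefore $\pi$ is an over-twist pattern with over-rotation number $\rho$ exhibited by $f$.

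In writing this out carefully, the routine points to verify are: that an over-rotation pair $(p,q)$ with $\gcd(p,q)=1$ forces period exactly $q$ and that every cycle with over-rotation number $p/q$ has period a multiple of $q$ (both immediate from the definition of the over-rotation pair and the coprimality remark preceding Theorem~\ref{t:sharp}); that the descent from $\nu$ through $\mathrm{pattern}(R)$ stays inside $\mathcal{P}$ (it does, since $f$ exhibits every pattern forced by a pattern it exhibits); and the case analysis in (A) near $r_f$. The only genuinely hard ingredient is the attainment of $r_f$ in (A), which I would invoke from \cite{B1, blo95a, B2} rather than reprove.
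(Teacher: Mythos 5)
The paper offers no proof of Theorem~\ref{t:sharp} at all: it is imported wholesale from \cite{B1}, \cite{blo95a} and \cite{B2}, so there is nothing internal to compare your argument against. Taken on its own terms, your two-part reduction is sound and in fact more informative than the paper's bare citation. Part (B) is a complete and correct argument from the stated preliminaries: the finiteness of the set $\mathcal{P}$ of patterns of period $q$ and pair $(p,q)$ exhibited by $f$, the descent from a forced pattern $\nu$ of pair $(pt,qt)$ back into $\mathcal{P}$ via Theorem~\ref{overrotation:pairs}(2) (every $t$ satisfies $t\sha 1$) together with Theorem~\ref{frocing:patterns}, and the use of antisymmetry from Theorem~\ref{partial} all check out; your side remarks (coprimality forces period exactly $q$, every cycle of over-rotation number $p/q$ has period a multiple of $q$) are also correct. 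Part (A) is complete for $\rho\in(r_f,\tfrac12)$ and $\rho=\tfrac12$.

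The one place where your write-up is not self-contained is exactly the one you flag: attainment of the left endpoint when $r_f$ is rational. Your sketch for that case (bounded periods give finitely many values; unbounded periods require a limit/compactness argument using piecewise monotonicity) is a plausible outline but not a proof, and this is precisely the nontrivial ``sharpness'' content that the three cited papers establish. Since the paper itself treats the whole theorem as a citation, deferring this single ingredient to the same references is a legitimate way to structure the argument; just be explicit that what you have produced is a reduction of Theorem~\ref{t:sharp} to the endpoint-attainment result, not an independent proof of it. If you wanted a fully self-contained treatment, the endpoint case is where all the remaining work lives.
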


A \emph{pattern} $\pi$ is called \emph{convergent} if for any \emph{cycle} $P$ which \emph{exhibits} $\pi$, the $P$-\emph{linear} map $f$ has a unique \emph{fixed point}. The following result follows:

\begin{theorem}[\cite{BM1}]\label{div:res}
	If  a pattern $\pi$ is not convergent, then for any arbitrary over-rotation number $(p,q)$, $\pi$ forces a pattern $\nu$ with over-rotation pair $(p,q)$. 
\end{theorem}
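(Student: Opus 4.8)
The plan is to reduce the assertion, using the results already quoted, to the single statement that the over-rotation interval forced by $\pi$ is all of $[0,\tfrac12]$, and then to build cycles of arbitrarily small over-rotation number.

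First I would fix a cycle $P$ exhibiting $\pi$ for which the $P$-linear map $f=f_P$ on $[\min P,\max P]$ has more than one fixed point; such a $P$ exists exactly because $\pi$ is not convergent. Since $P$ has period $>1$ no point of $P$ is fixed, and as $f$ is piecewise linear it agrees with the identity on no interval, so every fixed point of $f$ is a transversal crossing of the graph with the diagonal. As $f(\min P)>\min P$ and $f(\max P)<\max P$, the sign of $f-\mathrm{id}$ is $+$ on the leftmost gap between consecutive fixed points and $-$ on the rightmost, and it alternates in between, so $f$ has an odd number of fixed points; thus non-convergence means $f$ has at least three. Let $\alpha_1<\alpha_2<\alpha_3$ be the three smallest. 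Then $f<\mathrm{id}$ on $(\alpha_1,\alpha_2)$ and $f>\mathrm{id}$ on $(\alpha_2,\alpha_3)$, so the graph crosses the diagonal at $\alpha_2$ from below to above (a ``repelling'' fixed point) while near $\alpha_1$ and $\alpha_3$ it crosses from above to below; and since $\alpha_1,\alpha_3$ are fixed with $f(\min P)>\min P$, $f(\max P)<\max P$, one has $\min P<\alpha_1$ and $\max P>\alpha_3$. In particular the orbit $P$ already performs one full excursion reaching below $\alpha_1$ and above $\alpha_3$.

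Next I would run the reduction. By Theorem~\ref{frocing:patterns} the patterns forced by $\pi$ are precisely those exhibited by cycles of $f$, so it is enough to exhibit, for every pair $(p,q)$ with $0<\tfrac{p}{q}\le\tfrac12$, a cycle of $f$ of over-rotation pair $(p,q)$. Assume for the moment that the over-rotation interval $I_f$ of $f$ equals $[0,\tfrac12]$. Given $(p,q)$, pick a rational $\rho\in(0,\tfrac{p}{q})$; as $\rho\in I_f$ and $f$ is piecewise monotone, Theorem~\ref{t:sharp} produces a cycle $C$ of $f$ with $\orn(C)=\rho$, and writing $\orp(C)=(c,d)$ we have $\tfrac{c}{d}=\rho<\tfrac{p}{q}$, whence part~(1) of Theorem~\ref{overrotation:pairs} gives a cycle of $f$ of over-rotation pair $(p,q)$; this cycle exhibits a pattern $\nu$ with $\orp(\nu)=(p,q)$, and $\pi$ forces $\nu$ by Theorem~\ref{frocing:patterns}. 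Since Corollary~\ref{ovr:int} gives $I_f=[r_f,\tfrac12]$, everything is reduced to proving $r_f=0$, i.e.\ that for every $n$ the map $f$ has a cycle of over-rotation number at most $\tfrac{1}{n}$ — say one of over-rotation pair $(1,2n+1)$.

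This last step is the technical heart, and I expect it to be the main obstacle; it is where non-convergence is essential. One wants, for each $n$, a cycle of $f$ that makes essentially a single turn (hence only a bounded number of sign changes of $f-\mathrm{id}$ along it) while having arbitrarily large period, so as to force the over-rotation number down to $\tfrac{1}{2n+1}$. The natural tool is the Markov graph of $f$ on the $P$-basic intervals: the self-coverings $f\bigl([\alpha_1,\alpha_2]\bigr)\supseteq[\alpha_1,\alpha_2]$ and $f\bigl([\alpha_2,\alpha_3]\bigr)\supseteq[\alpha_2,\alpha_3]$, combined with the coverings supplied by the orbit points of $P$ that cross $\alpha_1$ and $\alpha_3$ during the excursion, should yield a loop that winds $n$ times through a region where $\operatorname{sgn}(f-\mathrm{id})$ is constant and then closes up across the whole excursion, the periodic orbit it carries having over-rotation pair $(1,2n+1)$. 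The delicate point is that one cannot simply ``loiter'' near the attracting fixed point $\alpha_1$ — orbits there converge to $\alpha_1$ and cannot be periodic — so the loop must be organised around the repelling fixed point $\alpha_2$ and the excursion of $P$, and one has to verify both that the loop exists and that the count of diagonal crossings stays bounded as $n\to\infty$. This construction is carried out in \cite{BM1}, and I would follow that argument.
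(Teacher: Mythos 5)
The paper does not prove Theorem \ref{div:res}: it is imported verbatim from \cite{BM1}, so there is no in-paper argument to compare your proposal against. Your structural work is correct as far as it goes. For a $P$-linear map every fixed point lies in the interior of a $P$-basic interval on which the slope is not $1$ (the endpoints are periodic of period $>1$), so the crossings are transversal, the sign of $f-\mathrm{id}$ alternates, and non-convergence indeed yields at least three fixed points. The reduction via Theorem \ref{frocing:patterns}, Corollary \ref{ovr:int} and part (1) of Theorem \ref{overrotation:pairs} correctly shows that everything comes down to proving $r_f=0$; the detour through Theorem \ref{t:sharp} is harmless but unnecessary, since any cycle with over-rotation number below $\tfrac{p}{q}$ feeds directly into Theorem \ref{overrotation:pairs}(1).

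The genuine gap is that the proposal stops exactly where the content of the theorem begins. The assertion that non-convergence forces cycles of over-rotation number tending to $0$ (say of over-rotation pair $(1,n)$ for all large $n$) \emph{is} the theorem, up to the soft reductions above; your final paragraph only gestures at a Markov-graph construction and then writes that "this construction is carried out in \cite{BM1}." Since \cite{BM1} is the very source of the statement being proved, this is a citation of the result to itself rather than a proof. Concretely, what is missing is the loop: for each $n$ you must exhibit an explicit $f$-loop of intervals in the sense of Lemma \ref{ALM2}, show it closes up using the self-coverings between $\alpha_1<\alpha_2<\alpha_3$ together with the coverings supplied by the excursion of $P$, and --- the delicate part you yourself flag --- verify that the periodic orbit it carries has exactly one block of sign changes of $f(x)-x$ along the orbit, so that its over-rotation pair is $(1,\cdot)$ and not merely its period large. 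None of this is carried out, so the proposal is a sound reduction wrapped around an unproven core.
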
 

\begin{cor}\label{conv}
	If $\pi$ is an over-twist pattern, then $\pi$ is convergent. 
\end{cor}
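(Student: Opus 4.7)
The plan is to establish this corollary by contrapositive, with Theorem \ref{div:res} doing essentially all of the work. Specifically, I would assume that $\pi$ is \emph{not} convergent and then produce a pattern $\nu$, forced by $\pi$, with the same over-rotation number as $\pi$ but distinct from $\pi$; this directly contradicts the definition of over-twist.

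Let $(s,t) = \orp(\pi)$, so $\orn(\pi) = s/t$ with $0 < s/t \leqslant 1/2$. The key observation is that the pair $(2s, 2t)$ trivially satisfies $0 < 2s/(2t) = s/t \leqslant 1/2$ and is therefore a legitimate over-rotation pair. Applying Theorem \ref{div:res} to the non-convergent pattern $\pi$ with this choice of pair yields a pattern $\nu$ forced by $\pi$ whose over-rotation pair is $(2s, 2t)$.

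Now $\orn(\nu) = 2s/(2t) = s/t = \orn(\pi)$, so $\nu$ has the same over-rotation number as $\pi$. On the other hand, the second coordinate of an over-rotation pair encodes the period of the cycle, so the period of $\nu$ is $2t$ while the period of $\pi$ is $t$; consequently $\nu \neq \pi$ as patterns. This contradicts the assumption that $\pi$ is over-twist, and the contrapositive is complete.

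There is essentially no obstacle here beyond bookkeeping: Theorem \ref{div:res} hands us a pattern with any prescribed over-rotation pair, and the only design choice is to select a pair that preserves the over-rotation number of $\pi$ while forcing a different period. Doubling the pair is the simplest such choice, but any multiple $(ks, kt)$ with $k \geqslant 2$, or even a suitably different pair with the same ratio, would serve equally well.
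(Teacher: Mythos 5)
Your proof is correct and follows essentially the same route as the paper, which states the corollary as an immediate consequence of Theorem \ref{div:res} without further argument. Your explicit choice of the pair $(2s,2t)$ — same over-rotation number, different period, hence a genuinely different forced pattern — is exactly the deduction the paper leaves implicit.
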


From Corollary \ref{conv}, it follows that to study \emph{over-twist patterns}, it is sufficient to consider  maps with a unique fixed point. Let $\mathcal{U}$ be the \emph{collection} of all \emph{piece-wise monotone continuous maps} with a unique fixed point. We will always denote such a fixed point by $a$. 

\subsection{Green patterns}\label{ss:green}

Consider a \emph{cycle} $P$ of a \emph{map} $ f \in \mathcal{U}$. A point $g$ of $P$ is called \emph{green} if it \emph{maps} to the \emph{same side} of $a$ as $g$. A point $b$ of $P$ is called \emph{black} if it maps to the \emph{side} of $a$ not containing $b$. Let us denote by $\mathcal{G}(P)$ and $\mathcal{B}(P)$, the \emph{collection} of all \emph{green points} and \emph{black points} of $P$ respectively. We call a \emph{cycle} $P$, \emph{green} if $f|_{\mathcal{G}(P)}$ is \emph{increasing} and $f|_{\mathcal{B}(P)}$ is \emph{deceasing}. We call a \emph{pattern} $\pi$ \emph{green},  if any \emph{cycle} which \emph{exhibits} $\pi$ is \emph{green} (See Figure \ref{drawing1}). 

\begin{theorem}\label{nec:green}
	A necessary condition for a pattern $\pi$ to be over-twist is that $\pi$ is green. 
\end{theorem}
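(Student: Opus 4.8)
The plan is to argue by contradiction: assume $\pi$ is over-twist but not green, and produce a pattern with the same over-rotation number that $\pi$ forces, contradicting over-twistness. By Corollary \ref{conv} we may assume $\pi$ is convergent, so any cycle $P$ exhibiting $\pi$ lives in a map $f\in\mathcal{U}$ with unique fixed point $a$, and I will work throughout with the $P$-linear map $f_P$. Since $\pi$ is not green, either $f_P|_{\mathcal{G}(P)}$ fails to be increasing or $f_P|_{\mathcal{B}(P)}$ fails to be decreasing; that is, there exist two points of $P$ on the same side of $a$ whose order is reversed by $f_P$, while both images lie on the same side of $a$ (the green case) — or the analogous reversal among black points. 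The idea is that such a ``local disorder'' can be eliminated by a straightening/combing operation that strictly decreases some complexity measure (e.g. the number of black points, or the number of monotonicity laps of the restriction, or the over-rotation pair's numerator) without increasing the over-rotation number, and that the resulting simpler pattern is genuinely forced by $\pi$.

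Concretely, the key steps are: (1) Formalize non-greenness as the existence of points $x<y$ in $P$, both on the same side of $a$, with $f_P(x),f_P(y)$ on the same side of $a$ but $f_P(x)>f_P(y)$ (for green points) — and set up the symmetric statement for black points. (2) Use Theorem \ref{frocing:patterns}: to show $\pi$ forces some pattern $\nu$, it suffices to exhibit a cycle $Q$ of the $P$-linear map $f_P$ realizing $\nu$. So I want to locate, inside $f_P$, a cycle whose pattern is green (or at least strictly simpler) with the same over-rotation number. (3) Count signs: the over-rotation number of a cycle is determined by how many points cross the fixed point $a$ under iteration (the black points, essentially), via the definition in \S\ref{ss:over:number}. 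A local order-reversal among same-side points with same-side images does not change the black/green labeling along the orbit, hence does not change the over-rotation pair. (4) Show that the graph of $f_P$, because of the reversal, must have an ``extra turn'' forcing — by the intermediate value theorem applied between the crossing points — an additional periodic orbit (or a periodic orbit of a combed-down pattern) with the same crossing pattern, hence the same over-rotation number, but which is not equivalent to $\pi$ (it is strictly shorter or has fewer laps). This contradicts $\pi$ being over-twist.

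The main obstacle is step (4): turning the qualitative ``extra turn in the graph'' into an honest periodic orbit of the correct over-rotation number. The delicate point is that straightening the green part could in principle merge points of $P$ or create a cycle whose over-rotation number has jumped up to $\tfrac12$ (which would not contradict over-twistness, since $\pi$ trivially forces lots of period-$2$ behavior). I expect to control this by working on the smallest invariant sub-collection of $P$ exhibiting the disorder, using the loop/horseshoe machinery for interval maps (covering relations among the $P$-basic intervals) to extract a cycle that (a) stays on the same sides of $a$ as the relevant points of $P$ — guaranteeing the over-rotation number is preserved — and (b) realizes a pattern strictly below $\pi$ in modality or period. Once both (a) and (b) hold, Theorem \ref{frocing:patterns} gives $\pi\gg\nu$ with $\orn(\nu)=\orn(\pi)$ and $\nu\ne\pi$, contradicting the definition of over-twist and completing the proof.
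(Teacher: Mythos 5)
The paper never actually proves Theorem \ref{nec:green}; it is stated as a known fact (it comes from the Blokh--Misiurewicz literature), so your proposal has to stand on its own. Its high-level strategy --- argue by contradiction and exhibit a cycle $Q$ of the $P$-linear map with $Q\neq P$ and $\orn(Q)=\orn(P)$, then invoke Theorem \ref{frocing:patterns} --- is the correct one, and it is exactly the template the paper uses for the closely analogous Theorem \ref{first}. The problem is that the one step that constitutes the proof, your step (4), is not carried out: you describe it as ``the main obstacle'' and say you ``expect to control'' it. As written, the proposal is a research plan, not a proof. What is missing is the concrete construction: if greenness fails, there are two points $x, y$ of $P$ on the same side of $a$ with $[y,a]\subsetneq[x,a]$ such that $[y,a]$ still $f$-covers the interval that follows $[x,a]$ in the fundamental admissible loop of $P$ (this is where the order-reversal is actually used). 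Replacing $[x,a]$ by $[y,a]$ gives a new admissible loop $\beta$; Lemma \ref{ALM2} produces a cycle $Q$ associated with $\beta$; since each interval of $\beta$ lies on the same side of $a$ as the corresponding interval of the fundamental loop, $\orn(Q)=\orn(P)$; and $Q\neq P$ because the relevant iterate of the base point of $Q$ lies in $[y,a]$ while the corresponding point of $P$ is $x\notin[y,a]$. None of these verifications appears in your write-up, and they are the entire content of the theorem.

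A secondary error: you assert the forced cycle would be ``strictly shorter'' than $P$. That is impossible if it is to have the same over-rotation number. For an over-twist pattern the over-rotation pair $(p,q)$ has $p,q$ coprime, so any cycle with over-rotation number $p/q$ has period a multiple of $q$, hence period at least $q$. The contradiction does not require any decrease in period or modality --- it only requires that $Q$ exhibit a pattern different from $\pi$ with the same over-rotation number --- so you should drop the ``complexity decreases'' framing and instead prove $Q\neq P$ directly from the structure of the modified loop, as in the proof of Theorem \ref{first}.
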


\begin{figure}[H]
	\caption{A \emph{green pattern} $\pi$}
	\centering
	\includegraphics[width=0.7 \textwidth]{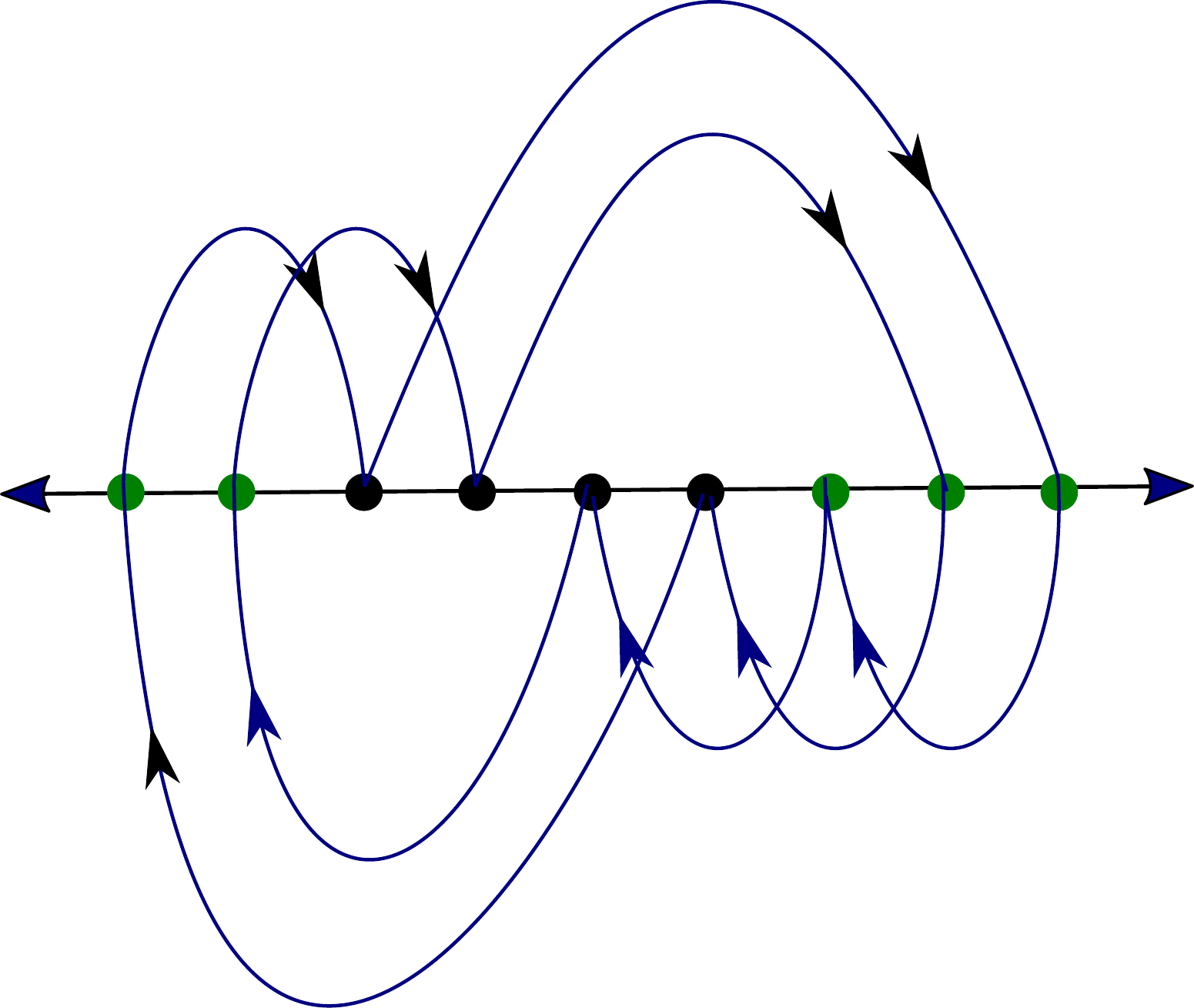}
	\label{drawing1}
\end{figure}
\subsection{Loops of intervals}\label{ss:loops}

Let $f$ be a \emph{continuous} interval map. An \emph{interval} $L$ is said to $f$-\emph{cover} an \emph{interval} $K$ if $f(L) \supseteq K$.  By an $f$-\emph{loop of intervals} $\alpha$ of \emph{length} $k$, we shall mean a \emph{finite} a  \emph{sequence} of $k$  \emph{intervals}  $\{I_i:  i=1,2,\dots k\}$ such that  $I_j$  $f$-\emph{covers} $I_{j+1}$ for all $ j =1,2, \dots k$ and also $I_{k}$ $f$-\emph{covers} $I_{1}$. We denote such  an $f$-\emph{loop of intervals} by $\alpha : I_1 \to I_2 \to \dots I_{k-1} \to I_k \to I_1$.

The following result follows: 

\begin{lemma}[\cite{alm00}]\label{ALM2}
	Given any $f$-loop of intervals  $\alpha : I_1 \to I_2 \to $ $ \dots I_{k-1} $ $  \to I_k \to I_1$ of length $k$, there exists a periodic point $x \in I_1$ such that $f^j(x) \in I_j $ for $
	 j =1,2, \dots k$ and $ f^k(x)=x$.  The cycle $Q = \{ x, $ $ f(x), $ $ f^2(x), \dots $ $  f^{k-1}(x) \}$ is called  the cycle associated with the loop $\alpha$. 
	
\end{lemma}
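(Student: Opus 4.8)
The plan is the classical ``pulling back the loop'' argument, which rests on one elementary covering lemma: \emph{if $g$ is continuous on a compact interval and $A,B$ are compact intervals with $g(A)\supseteq B$, then there is a compact subinterval $A^{*}\subseteq A$ with $g(A^{*})=B$.} To prove this I would first pick $c,d\in A$ whose $g$-values are the two endpoints of $B$, and then ``trim'' to the innermost such preimages: assuming $c<d$, replace $c$ by $c':=\max\{t\in[c,d]:g(t)=g(c)\}$ and then $d$ by $d':=\min\{t\in[c',d]:g(t)=g(d)\}$ (both extrema exist, the relevant sets being non-empty and closed), and set $A^{*}:=[c',d']$ (symmetrically when $c>d$). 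Then $g$ sends the endpoints of $A^{*}$ to the endpoints of $B$; by maximality of $c'$ and minimality of $d'$, $g$ omits both endpoint-values of $B$ on the open interval $(c',d')$; and an intermediate value argument forces $g((c',d'))\subseteq\mathrm{int}\,B$ (a value lying outside $[\,g(c'),g(d')\,]$ somewhere in $(c',d')$ would, together with the endpoint values, produce a preimage of an endpoint of $B$ strictly inside $(c',d')$, a contradiction). Hence $g(A^{*})\subseteq B$, while $g(A^{*})$ is connected and contains both endpoints of $B$, so $g(A^{*})=B$.

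Granting this, I would pull the loop $\alpha$ back one interval at a time (assuming, as is standard, that the $I_{j}$ are closed intervals). Put $I_{k+1}:=I_{1}$ and $L_{k}:=I_{1}$; then, descending from $j=k$ to $j=1$, given a compact interval $L_{j}\subseteq I_{j+1}$, the covering relation $f(I_{j})\supseteq I_{j+1}\supseteq L_{j}$ together with the covering lemma yields a compact interval $L_{j-1}\subseteq I_{j}$ with $f(L_{j-1})=L_{j}$. This produces compact intervals $L_{0}\subseteq I_{1}$, $L_{1}\subseteq I_{2}$, $\dots$, $L_{k-1}\subseteq I_{k}$ with $f(L_{j-1})=L_{j}$ for all $j$, so that $f^{j}(L_{0})=L_{j}$ for $0\le j\le k$ and in particular $f^{k}(L_{0})=L_{k}=I_{1}\supseteq L_{0}$.

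Finally, since $f^{k}$ is continuous on the compact interval $L_{0}$ and $f^{k}(L_{0})\supseteq L_{0}$, it has a fixed point $x\in L_{0}$: picking $p',q'\in L_{0}$ with $f^{k}(p')$ and $f^{k}(q')$ equal to the two endpoints of $L_{0}$, the function $f^{k}-\mathrm{id}$ takes values of opposite sign at $p'$ and $q'$ and hence vanishes somewhere in between. For this $x$ we have $x\in L_{0}\subseteq I_{1}$, $f^{j}(x)\in f^{j}(L_{0})=L_{j}\subseteq I_{j+1}$ for $1\le j\le k-1$, and $f^{k}(x)=x$; so $x$ is the required periodic point and its orbit is the cycle $Q$ associated with $\alpha$.

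The only step demanding genuine care is the covering lemma, and within it the intermediate value argument guaranteeing that the trimmed preimage interval maps \emph{exactly} onto $B$ (not onto a proper sub- or super-interval), handled uniformly for the two orientation cases $c<d$ and $c>d$; everything after that is formal bookkeeping. One caveat worth recording alongside the statement: the argument only delivers $f^{k}(x)=x$, so the period of $Q$ may be a proper divisor of $k$ (for example when $\alpha$ is itself a repetition of a shorter loop).
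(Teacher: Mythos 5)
Your argument is correct and is precisely the standard proof of this itinerary lemma from Alsed\`{a}--Llibre--Misiurewicz, which the paper cites without reproducing a proof: the trimmed covering lemma, the backward pull-back of the loop, and the intermediate-value fixed point for $f^{k}$ are all in order, as is your caveat that the period of the resulting cycle may be a proper divisor of $k$. (One remark: the paper's statement has an off-by-one in the indexing --- it should read $f^{j-1}(x)\in I_{j}$ --- and your conclusion $f^{j}(x)\in I_{j+1}$ for $0\le j\le k-1$ is the correct intended form.)
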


Let $f \in \mathcal{U}$. An $f$-\emph{loop} of \emph{intervals} $\alpha : I_1 \to I_2 \to \dots I_{k-1} \to I_k \to I_1$ is called an $f$-\emph{admissible loop of intervals} if \emph{one} \emph{end point} of each of \emph{intervals} $I_j$, $j=1,2,\dots k$ is the \emph{fixed point} $a$. Given a \emph{cycle} $P$ of \emph{period} $q$, the \emph{admissible loop of intervals} $\alpha : [x,a] \to [f(x), a] \dots [f^{q-1}(x), a] \to [x,a]$ for any $x \in P$, is called the \emph{fundamental admissible loop of intervals associated} with the \emph{cycle} $P$.

\subsection{Interval Exchange Transformation}\label{ss:IET}

	A \emph{bijection}  $T$ from $[0,1]$ to itself is called an $(n, k)$-\emph{Interval Exchange Transformation} on $[0,1]$, denoted by $(n, k)-\IET$, if: 
	
		\begin{enumerate}
		\item there exists some \emph{partition} of $[0,1]$:  $\mathcal{P}_{n} = \{0= x_0, x_1, \dots, x_{n}=1\}$ such that the \emph{restriction} of $T$ to each \emph{sub-interval}  $I_j = [x_i, x_{i+1})$, $j=0,1,2, \dots n-1$ is an \emph{affine} map having slope $\pm 1$.

		\item there are \emph{precisely} $k$ \emph{sub-intervals} $I_{j}$ such that $T|_{I_j}$ has \emph{slope} $-1$. 
	\end{enumerate}
	
	We call the  \emph{sub-intervals} $I_{j}, j=0,1,2, \dots n-1$,  \emph{intervals of isometry} and the points $x_j$,  \emph{separation points}. An \emph{interval of isometry} $I_j$ is said to be \emph{oriented} or \emph{flipped} according to whether $T|_{I_j}$ has \emph{slope} $+1$ or $-1$. 
	
\begin{figure}[H]
	\caption{A $(4,2)$-IET with \emph{segments} of \emph{isometry} $I_0$, $I_1$, $I_2$ and $I_3$}
	\centering
	\includegraphics[width=0.8 \textwidth]{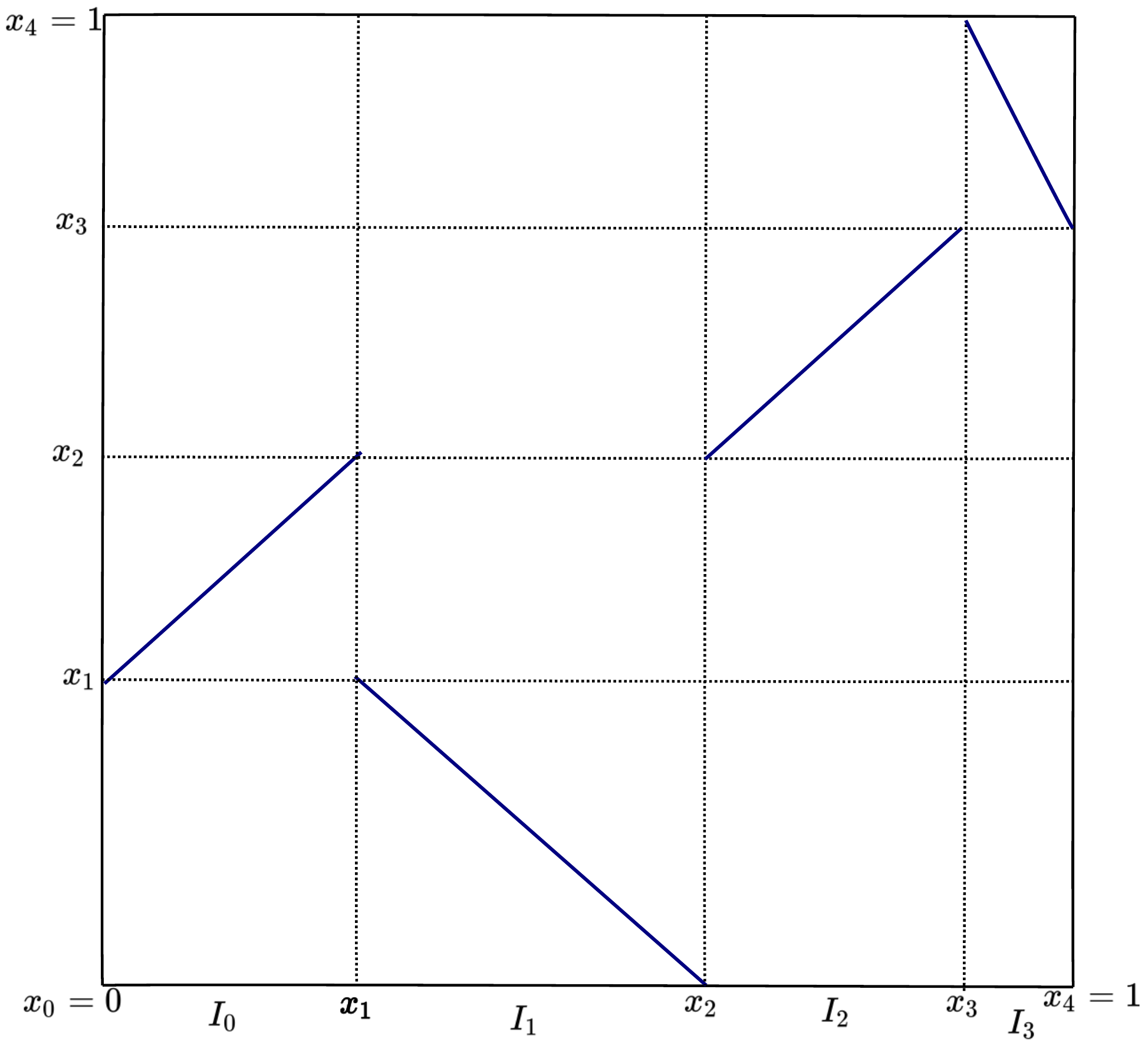}
	\label{drawing2}
\end{figure}

By a \emph{double permutation} on $n$ \emph{symbols}, we mean an \emph{injective function} $\pi_n^d : \{ 1,2 \dots n \} \to \{ -1,1,-2,2,-3,3,-4,4, \dots , -n,n\}$ such that $|\pi_n^d| :  \{ 1,2 \dots n \} \to \{ 1,2 \dots n \} $ is a \emph{permutation} on $n$ \emph{symbols}. Let $\mathcal{D}_n$ be the \emph{collection} of all \emph{double permutation} on $n$ \emph{symbols}. We can parameterize the \emph{set} of all $(n,k)$-IET on $[0,1]$ as follows. Let $\mathbb{R}_{+} = (0, \infty)$ and $\mathcal{C}_n = \mathbb{R}^n_{+} \times \mathcal{D}_n$. Then, we associate an $(n,k)$-IET with the \emph{ordered pair} $(\vec{\lambda}, \pi_n^d) \in \mathcal{C}_n$ where:

\begin{enumerate}
	\item $\vec{\lambda} = (\lambda_0, \lambda_1, \dots \lambda_{n-1})$ where $\lambda_{j} = x_{j+1} - x_j$ is the \emph{length} of the $i$-th \emph{interval} $I_j$, $j = 0,1,2, \dots n-1$.
	
		\item $\pi^{d}_n\in \mathcal{D}_n$ is defined as follows:
	
	\begin{enumerate}
		\item for any $i=1,2, \dots n$, $|\pi^{d}_n(i)|$ gives \emph{location} of $T(I_{i-1})$ in the \emph{set} $\{ T(I_{i-1}) : i=1,2, \dots n\}$, in the \emph{usual order} in $\mathbb{R}$. 
		\item 	$\pi^{d}_n(i) >0$ if $T|_{I_{i-1}}$ has \emph{slope} $1$ and $\pi^{d}_n(i) <0$ if $T|_{I_{i-1}}$ has \emph{slope} $-1$, for $i=1,2, \dots n$.

	\end{enumerate} 
	
\end{enumerate}

The following \emph{result} follows easily: 
\begin{lemma}
	$T|_{I_i}$ is a translation by $t_i \in \mathbb{R}_{+}$ where $t_i $ $=  \displaystyle \sum_{|\pi^{\sigma}_n(j)| <|\pi^{\sigma}_n(i)|} \lambda_j  -  \displaystyle  \sum_{j <i} \lambda_j $. 
\end{lemma}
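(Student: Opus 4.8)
The plan is to prove the formula by a direct bookkeeping of the left endpoints of the intervals of isometry and of their images under $T$, the only genuine input being that $T$ is a \emph{bijection} of $[0,1]$. First I would record the elementary observation that, since $T|_{I_j}$ is affine with slope $\pm 1$, each image $T(I_j)$ is a half-open subinterval of $[0,1]$ of length exactly $\lambda_j = x_{j+1}-x_j$. Because the $I_j$ partition $[0,1]$ and $T$ is a bijection of $[0,1]$, the images $\{T(I_j) : j=0,1,\dots,n-1\}$ must themselves form a partition of $[0,1]$ into half-open intervals: consecutive ones abut, with no gaps and no overlaps. This is the only step that deserves a careful word, since it is exactly where bijectivity (and the half-open convention on the $I_j$) is used, and it is what pins the images down as an honest tiling of $[0,1]$ rather than merely a cover up to a finite set.

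Next I would invoke the definition of the double permutation: by construction $|\pi^d_n(i)|$ is the position, counted from the left, of the image $T(I_{i-1})$ in the ordered list of all $n$ images (I will carry the $i\leftrightarrow i-1$ shift through the computation, or, as in the statement, reindex so that the $i$-th interval $I_i$ is matched with the value $\pi^d_n(i)$). Since the images tile $[0,1]$ in the order dictated by these positions, the intervals lying strictly to the left of $T(I_i)$ are precisely those $T(I_j)$ with $|\pi^d_n(j)| < |\pi^d_n(i)|$, and each such image has length $\lambda_j$. Hence the left endpoint of $T(I_i)$ equals $\sum_{|\pi^d_n(j)| < |\pi^d_n(i)|}\lambda_j$, while the left endpoint of $I_i$ itself is $x_i = \sum_{j<i}\lambda_j$.

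Finally, for an oriented interval $I_i$ the map $T|_{I_i}$ has slope $+1$, so it is the translation $x\mapsto x+t_i$ whose constant $t_i$ is the displacement of the left endpoint, i.e. $t_i = \sum_{|\pi^d_n(j)| < |\pi^d_n(i)|}\lambda_j - \sum_{j<i}\lambda_j$, which is the asserted formula. For a flipped interval the same endpoint bookkeeping (now the left endpoint of $I_i$ maps to the right endpoint of $T(I_i)$) gives $T|_{I_i}(x) = -x + c_i$ with $c_i = x_{i+1} + \sum_{|\pi^d_n(j)| < |\pi^d_n(i)|}\lambda_j$, and one reads off the corresponding constant in the same way. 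The only points to keep an eye on are the indexing convention relating the labels of the intervals of isometry to the arguments of $\pi^d_n$, and the mild abuse in writing $t_i\in\mathbb{R}_{+}$ (the displacement need not be positive in general); no estimate or nontrivial argument is needed beyond the tiling observation, which is why the statement "follows easily."
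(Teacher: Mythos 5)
Your argument is correct, and it is the standard computation; the paper itself offers no proof (the lemma is prefaced only by ``The following result follows easily''), so there is nothing to compare against --- your write-up simply supplies the omitted argument. The one step that genuinely needs saying is the one you isolate: bijectivity of $T$ together with the half-open convention forces the images $T(I_j)$ to tile $[0,1]$, after which the left endpoint of $T(I_i)$ is $\sum_{|\pi^d_n(j)|<|\pi^d_n(i)|}\lambda_j$ and the left endpoint of $I_i$ is $\sum_{j<i}\lambda_j$, giving the formula. You are also right to flag the two defects in the statement itself rather than in the proof: for a flipped interval $T|_{I_i}$ is a reflection, not a translation, so the lemma as written only applies verbatim to the oriented intervals (the constant you compute, $c_i=x_{i+1}+\sum_{|\pi^d_n(j)|<|\pi^d_n(i)|}\lambda_j$, is the correct analogue); and the claim $t_i\in\mathbb{R}_{+}$ is false in general, since the displacement of an interval moved to the left is negative, so $t_i\in\mathbb{R}$ is all one can assert. (There is also a harmless notational slip in the paper, $\pi^{\sigma}_n$ versus $\pi^{d}_n$, and the off-by-one between the interval labels $I_0,\dots,I_{n-1}$ and the domain $\{1,\dots,n\}$ of $\pi^d_n$, both of which you handle correctly.)
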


\section{Over-twist patterns as an Interval Exchange Transformation (IET) }\label{bimodal:section}

For  sets $C$ and $D$, we write $C \leqslant D$ if $c \leqslant  d$ for all $c \in C$ and $d \in D$. For any set $A$, let $\card(A)$, denote its \emph{cardinality}.

\begin{definition}\label{without:expansion}
	For a \emph{finite set} $A \subset \mathbb{R}$, and an \emph{interval map} $f$, we say that $f|_{A}$ \emph{maps} without \emph{expansion} if any two \emph{consecutive points} of $A$ are \emph{mapped} onto \emph{consecutive points} of $f(A)$. 
\end{definition}

\begin{theorem}\label{P:linear:IET}
	Suppose  a cycle $P$ of a $P$-linear map $f: [0,1] \to [0,1]$ can partitioned into $n$ subsets:  $K_1, K_2, \dots K_{n}$ $(K_1<  K_2 <  K_3 <  \dots K_n)$, called blocks with the following properties:
	
	\begin{enumerate}
	\item $f|_{K_i}$ is monotone and maps without expansion for $i=1,2,\dots n$, 
	
	\item there are precisely $k$ indices $j$ such that $f|_{K_j}$ is decreasing. 
	\end{enumerate}

 Then, there exists a $(n,k)$- IET, $T:[0,1] \to [0,1]$ and a cycle $Q$ of $T$ such that $f|_P$ is conjugate to $T|_{Q}$. 
\end{theorem}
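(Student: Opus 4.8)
The plan is to build the target IET directly from the combinatorial data of the cycle $P$ and the block decomposition $K_1 < K_2 < \dots < K_n$. Write $q = \card(P)$ and list the points of $P$ in increasing order as $p_1 < p_2 < \dots < p_q$. First I would define the conjugating homeomorphism $h$ on the finite set $P$: send $p_1 \mapsto 0$ and, proceeding from left to right, place the points of $P$ at equally spaced locations $\tfrac{0}{q-1}, \tfrac{1}{q-1}, \dots, \tfrac{q-1}{q-1} = 1$ in $[0,1]$, so that $h(p_j) = \tfrac{j-1}{q-1}$. This $h$ is an increasing bijection from $P$ onto the uniform grid $G = \{0, \tfrac{1}{q-1}, \dots, 1\}$. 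Define $T_0 = h \circ f \circ h^{-1}$ on $G$; by construction $T_0$ is a bijection of $G$ onto itself. The point of hypothesis (1) — that $f|_{K_i}$ maps without expansion and is monotone — is exactly what guarantees that $T_0$ sends each run of consecutive grid points coming from a block $K_i$ to a run of consecutive grid points, shifting it by a constant (rigidly, with orientation $+1$ on increasing blocks and $-1$ on decreasing ones). So $T_0$ is the restriction to $G$ of an honest interval exchange map.

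Next I would extend $T_0$ from the finite grid $G$ to a genuine IET $T$ on all of $[0,1]$. Use the block structure to cut $[0,1]$: the preimages $h(K_1) < h(K_2) < \dots < h(K_n)$ are $n$ disjoint finite subsets of $G$, each a block of consecutive grid points; let $x_0 = 0 < x_1 < \dots < x_n = 1$ be chosen so that $I_{i-1} = [x_{i-1}, x_i)$ contains exactly the grid points of $h(K_i)$ (for instance, put the separation point $x_i$ midway between the last grid point of $h(K_i)$ and the first of $h(K_{i+1})$). On each $I_{i-1}$ define $T$ to be the affine map of slope $+1$ if $f|_{K_i}$ is increasing and slope $-1$ if $f|_{K_i}$ is decreasing, with the unique translation/reflection constant that makes $T$ agree with $T_0$ on the grid points inside $I_{i-1}$ — such a constant exists and is consistent precisely because $f|_{K_i}$ maps without expansion, so $T_0$ already acts rigidly there. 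One then has to check that these $n$ affine pieces assemble into a bijection of $[0,1]$: this follows because $T_0$ permutes $G$, the blocks $h(K_i)$ partition $G$, and "without expansion" forces each image block to again be an interval's worth of grid points, so the images $T(I_0), \dots, T(I_{n-1})$ tile $[0,1]$ up to the finitely many separation points. Adjusting $T$ to be right-continuous at the separation points (the standard IET convention) completes the construction; by hypothesis (2) exactly $k$ of the pieces are flipped, so $T$ is an $(n,k)$-IET.

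Finally, set $Q = h(P) = G$. Since $T$ agrees with $T_0 = h \circ f \circ h^{-1}$ on $G$ and $T_0$ maps $G$ bijectively to $G$, the finite set $G$ is $T$-invariant, and because $P$ is a single $f$-cycle of period $q$, $G$ is a single $T$-orbit of period $q$, i.e. a cycle $Q$ of $T$. The map $h : P \to Q$ is an order-preserving bijection conjugating $f|_P$ to $T|_Q$ by the very definition of $T_0$, which is what the theorem asserts. The main obstacle — the one step deserving genuine care rather than bookkeeping — is verifying that the $n$ locally-defined affine pieces glue into a \emph{bijection} of $[0,1]$ rather than merely a piecewise isometry with overlaps or gaps; this is where the "without expansion" hypothesis does its real work, since it is what propagates the bijectivity of $T_0$ on the finite grid up to bijectivity of $T$ on the continuum, and one should phrase the argument so that the images of the $I_j$ are seen to be disjoint half-open intervals whose union is $[0,1)$. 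Everything else (monotonicity of $h$, consistency of the translation constants, right-continuity normalization, and the orbit count giving a single cycle) is routine.
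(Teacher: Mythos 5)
Your overall strategy is the same as the paper's: place the points of the cycle on a grid in $[0,1]$, and extend the induced permutation of the grid affinely, block by block, with slope $\pm 1$. But the specific normalization you chose, $h(p_j)=\tfrac{j-1}{q-1}$ (equally spaced \emph{including the endpoints} $0$ and $1$), makes the step you yourself single out as the crux --- that the affine pieces assemble into a bijection of $[0,1]$ --- false. Concretely, take $q=3$, $P=\{p_1<p_2<p_3\}$ with $f(p_1)=p_2$, $f(p_2)=p_3$, $f(p_3)=p_1$, and blocks $K_1=\{p_1,p_2\}$ (increasing, without expansion) and $K_2=\{p_3\}$. Your grid is $\{0,\tfrac12,1\}$; the first interval of isometry must contain both $0$ and $\tfrac12$, hence has length greater than $\tfrac12$, while the slope-$+1$ affine map agreeing with $T_0$ there is translation by $\tfrac12$, so its image sticks out past $1$. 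No choice of separation points repairs this, because the obstruction lies in the grid placement, not in where you cut: the endpoint $0$ has no room to its left and $1$ none to its right, so the ``cells'' owned by the grid points have unequal lengths and a rigid map cannot permute them. The hypothesis ``without expansion'' does not rescue the tiling claim; my example satisfies it.

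The repair is exactly the paper's choice of normalization: send the $i$-th point of $P$ to $y_i=\tfrac{2i-1}{2q}$, the midpoint of the cell $[\tfrac{i-1}{q},\tfrac{i}{q})$. Then every point of the cycle owns a cell of the \emph{same} length $\tfrac1q$, a block of $m$ consecutive points owns an interval of length $\tfrac{m}{q}$, and since $f|_P$ is a bijection carrying each block onto a run of consecutive points (this is where monotonicity and ``without expansion'' are actually used), the affine image of each interval of isometry is precisely the union of the cells of the image points. As the image runs partition $P$, these image intervals tile $[0,1)$ and bijectivity is automatic. With that one change the rest of your argument (the conjugacy via $h$, the count of $k$ flipped pieces, and $Q=h(P)$ being a single $T$-cycle of period $q$) goes through and coincides with the paper's proof.
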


\begin{proof}
	Let the \emph{elements} of $P$ be $0=x_1, x_2, \dots x_q=1$. Define, a \emph{function} $\psi$ on $[0,1] $ as follows: define, $\psi$ on $P$  by $ \psi(x_i)  =  y_i$ where $y_i = \frac{2i-1}{2q}$ for $i=1,2 \dots q$; \emph{extend} $\psi$ from $P$ to $[0,1]$ by \emph{defining} it \emph{linearly} on each \emph{component} of $[0,1] -P$.

	We \emph{define} $T:[0,1] \to [0,1]$ as follows: 
	\begin{enumerate}
		\item for each $y_i$, $i=1,2,\dots q$, \emph{define} $T(y_i) = \psi(f(x_i))$, 
		
		\item for each $y_i$, $i=1,2,\dots q$, the \emph{graph} of $T$ in the \emph{interval} $\mathcal{N}(y_i) = (y_i - \frac{1}{2}, y_i + \frac{1}{2})$ is the \emph{straight  line segment} $L_i$ of \emph{slope} $\pm 1$ \emph{passing} through the point $(y_i, T(y_i))$;   \emph{slope} of $L_i$ is $-1$ if  $x_i $ lies in the \emph{block} $K_i$ and $f|_{K_i}$ is \emph{decreasing} and \emph{slope} of $L_i$ is $+1$  if $f|_{K_i}$ is \emph{increasing}. 
	\end{enumerate}
	
	Observe that \emph{union} of the intervals $\mathcal{N}(y_i)$, $i=1,2,\dots q$ \emph{yields} the whole $[0,1]$ and hence $T$ is \emph{defined}. Also, since, $f|_{K_i}$ is \emph{monotone} and \emph{maps} without \emph{expansion}, it can be seen easily that if $x_i$ and $x_j$ lie in the same \emph{block} $K_{\ell}$ for some $\ell \in \{1,2, \dots n\}$, then the \emph{line segments} $L_i$ and $L_j$ lie on a  \emph{straight line}. The \emph{construction} yields an $(n,k)$-IET, $T:[0,1] \to [0,1]$.

	Let $Q = \{ y_1, y_2, \dots y_q\}$. Then, by construction, $Q$ is a \emph{periodic orbit} of $T$ and  $\psi$ \emph{conjugates} $f|_P$ with $T|_{Q}$. Hence the result follows. 
\end{proof}

For $x,y \in [0,1]$, let the \emph{notation} $x >_a y $ \emph{denote} that  $x$ and $y$ belong to the \emph{same side} of $a$ and $x$ lies \emph{farther away} from the \emph{fixed point} $a$ \emph{than} $y$. Also, for a \emph{set} $A \subset [0,1]$, let $[A]$ \emph{denote} its \emph{convex hull}. 

\begin{definition}\label{sibling:defn}
	Let $f \in \mathcal{U}$ be of any \emph{modality} $m$. Two \emph{points} $x,y \in [0,1]$ are called \emph{sibling} of each other \emph{under} $f$, if they \emph{map} \emph{under} $f$ to the same \emph{point} of $[0,1]$. In particular, every \emph{point} of $[0,1]$ is its own \emph{sibling} \emph{under} $f$. 
\end{definition}

\begin{definition}\label{c:f}
	
	Let $f \in \mathcal{U}$ be of any \emph{modality} $m>0$. Since, the \emph{collection} of all \emph{siblings} of a \emph{point} $x \in [0,1]$ forms a \emph{closed set}; for every \emph{point} $x \in [0,1]$, we can choose, the \emph{sibling} $s(x)$ of $x$ \emph{closest} to $a$ on the \emph{same  side} of $a$ \emph{containing} $x$. The set $ S(f) = \{ s(x) : x \in [0,1]  \}  $ is called the \emph{special set} of $f$. The \emph{sets} $S(f) \cap [0,a]$ and $S(f) \cap [a,1]$ are called \emph{left special set} and \emph{right special set} of $f$ and denoted by  $S_1(f)$ and $S_2(f)$ respectively. 
	
\end{definition}

 It is easy to see that $f(S(f)) = [0,1]$. Also $S(f)$ is the \emph{union} of \emph{disjoint} \emph{components}; we call each such \emph{component} a \emph{concordant piece} of $f$. If $A$ is a \emph{concordant piece} of $f$ and $A \subseteq S_1(f)$, then we call $A$, a \emph{left concordant piece} of $f$. Similarly, if $A \subseteq S_2(f)$, then we call $A$, a \emph{right concordant piece} of $f$ (See Figure \ref{drawing3}).

 \begin{figure}[H]
 	\caption{A map $f$ with \emph{modality} $m=6$ having \emph{concordant pieces} $K_1, K_2, K_2, K_4$ and $K_5$}
 	\centering
 	\includegraphics[width=0.9 \textwidth]{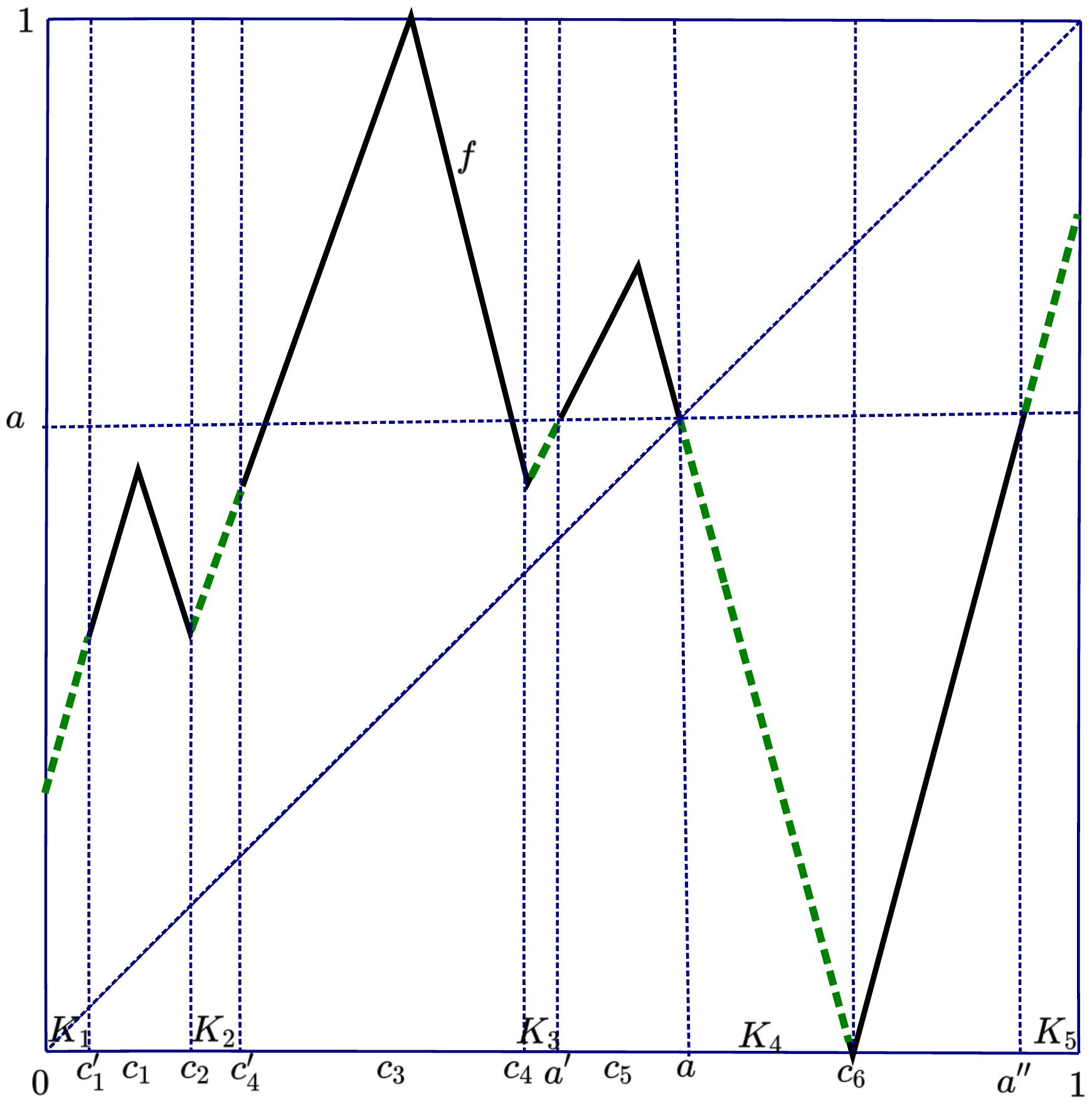}
 	\label{drawing3}
 \end{figure} 

\begin{theorem}\label{first}
	Let $f \in \mathcal{U}$ be of any modality $m>0$. Let $P$ be an over-twist cycle of $f$. Then, $ P \subseteq S(f)$. 
\end{theorem}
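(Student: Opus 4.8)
The plan is to show that if $P$ is an over-twist cycle of $f \in \mathcal U$, then every point $x \in P$ coincides with its chosen sibling $s(x)$, i.e. $x$ is the point of its sibling class nearest to the fixed point $a$ on its own side of $a$. Suppose not: then there is $x \in P$ with a sibling $x'$ strictly closer to $a$ than $x$ on the same side of $a$, so $f(x') = f(x)$ and $a <_a$-compared we have $x >_a x'$ (using the paper's $>_a$ notation). The idea is to use $x'$ to ``shortcut'' the fundamental admissible loop of intervals associated with $P$ and thereby produce a cycle $Q$ with the same over-rotation number as $P$ but exhibiting a different pattern, contradicting the fact (Theorem~\ref{nec:green} and the definition of over-twist) that $P$'s pattern forces no other pattern of the same over-rotation number.

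The key steps, in order. First, recall from Corollary~\ref{conv} that the pattern of $P$ is convergent and from Theorem~\ref{nec:green} that it is green, so $f|_{\mathcal G(P)}$ is increasing and $f|_{\mathcal B(P)}$ is decreasing; in particular $f$ restricted to $P$ is piecewise monotone with monotonicity governed by the green/black splitting relative to $a$. Second, take $x \in P$ with a closer sibling $x'$ on the same side, and examine the fundamental admissible loop $\alpha : [x_1,a] \to [f(x_1),a] \to \dots \to [f^{q-1}(x_1),a] \to [x_1,a]$ (choosing the enumeration so that $x = x_j$ for the relevant index). Since $f(x') = f(x)$ and $x'$ lies between $a$ and $x$, the interval $[x',a]$ still $f$-covers $[f(x),a]$: indeed $f([x',a])$ is an interval with one endpoint $f(a) = a$ and the other endpoint $f(x') = f(x)$, so $f([x',a]) = [f(x),a]$ exactly, and therefore $[x',a] \subsetneq [x,a]$ can replace $[x,a]$ at its slot in the loop. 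Third, this gives a shorter-or-strictly-smaller admissible loop $\alpha'$; by Lemma~\ref{ALM2} it carries a periodic point $y \in [x',a]$ with a cycle $Q$, and one checks $Q$ has the same over-rotation pair as $P$ (the number of sign changes of $f(z)-z$, $f^2(z)-f(z)$ along the loop is unchanged because replacing $[x,a]$ by $[x',a]$ does not change which side of $a$ the images land on). Fourth, argue $Q$ exhibits a pattern $\nu \neq \pi$ where $\pi$ is the pattern of $P$: since $x' \in [x',a] \subseteq [a,x] \setminus P$ lies strictly between $a$ and $x$ and is distinct from every point of $P$ on that side, the orbit of $y$ cannot coincide with $P$ as a set in the required order-isomorphic way — here one uses that $Q \subseteq [x',a]$ is confined to a proper subinterval that omits $x$. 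This contradicts over-twistness of $\pi$ via the definition (an over-twist pattern forces no distinct pattern of the same over-rotation number), so no such $x$ exists and $P \subseteq S(f)$.

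The main obstacle, and where I would spend the most care, is Step four: cleanly producing a cycle $Q$ that genuinely exhibits a \emph{different} pattern and not merely a different cycle exhibiting $\pi$ again. The shortcut loop $\alpha'$ could, a priori, still have its associated cycle equal to $P$ if the shrinking $[x,a] \to [x',a]$ is not ``felt'' by the orbit. To rule this out I would make the replacement maximal — pick $x$ and $x'$ so that $x$ is the outermost point of $P$ failing the siblinghood property — and then track the itinerary: the new cycle $Q$ has a point in $[x',a]$ mapping as $x$ would, but since $Q$ is trapped inside intervals all of which have one endpoint $a$ and are no larger than those of $\alpha$, with at least one strictly smaller on the side of $x$, the point of $Q$ playing $x$'s role is strictly closer to $a$ than $x$, so $Q \neq P$; and because $P$ is over-twist hence forcing-minimal, $Q$ must nonetheless exhibit $\pi$, forcing $Q = P$ by minimality of $P$ among cycles of pattern $\pi$ confined that way — the contradiction. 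A secondary technical point is verifying the over-rotation pair is exactly preserved, not merely $\le$; this follows because $f([x',a]) = [f(x),a]$ with equality, so the green/black type of the point in that slot is unchanged, but it deserves an explicit line.
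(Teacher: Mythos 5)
Your strategy coincides with the paper's: replace $[x,a]$ by $[x',a]$ in the fundamental admissible loop, extract a cycle $Q$ from the new loop via Lemma~\ref{ALM2}, note that the over-rotation number is unchanged (each slot stays on the same side of $a$), and contradict over-twistness by showing $Q\neq P$. You also correctly flag the one delicate point, namely that the new loop might still carry $P$ itself. But your argument for $Q\neq P$ does not close that gap. Knowing that the point of $Q$ occupying the modified slot lies in $[x',a]$, hence is not $x$, only shows that $Q$ and $P$ differ as \emph{marked} loops; it does not exclude $Q=P$ as a set of points with the slot-to-point correspondence cyclically shifted, i.e.\ the point of $P$ sitting in the modified slot could be some other point of $P$ that happens to lie in $[x',a]$. (Your parenthetical claim that $Q\subseteq[x',a]$ is also false: only one point of $Q$ lies in that slot, the rest are distributed over the other slots.) Choosing $x$ to be the outermost point failing siblinghood does not repair this.

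What is missing is an anchor that pins down the correspondence. The paper takes $z$ to be the point of $P$ closest to $a$ on the side of $a$ not containing $x$; then $[z,a]\cap P=\{z\}$, so if $Q=P$ the point of $Q$ in the (unmodified) slot $[z,a]$ must be $z$ itself, and walking $k$ steps along the loop, where $f^k(z)=x$, forces $x=f^k(z)\in[x',a]$, which is false since $x>_a x'$. You need this step or an equivalent one. Two smaller points: the claim $f([x',a])=[f(x),a]$ ``exactly'' is unjustified when $f$ is not monotone on $[x',a]$ (only $\supseteq$ holds, which is all the covering relation needs, and preservation of the over-rotation pair uses only the sides of the slots, not equality of images); and the closing clause about $Q$ exhibiting $\pi$ and ``minimality of $P$ among cycles of pattern $\pi$'' is circular and unnecessary --- the contradiction is simply that the map witnessing over-twistness of $P$ now has a second cycle $Q\neq P$ with the same over-rotation number.
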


\begin{proof}
	By way of contradiction, suppose, there exists $ x \in P$, such that $ x \notin S(f)$. Then, $x$ has a \emph{sibling} $y \neq x$ which is \emph{closer} to the \emph{fixed point} $a$ than $x$. This means, we can \emph{replace} the interval $[x,a]$ in the \emph{fundamental admissible loop} $\alpha$ \emph{associated} with $P$   by $[y, a]$ to form a \emph{new} $f$-\emph{admissible loop} $\beta$.
	
	Let $Q$ be the \emph{cycle} associated with $\beta$ given by Theorem \ref{ALM2}.  By construction, the \emph{over-rotation numbers} of the \emph{cycles}: $P$and $Q$ is the same. We \emph{claim}:  $P \neq Q$. By way of \emph{contradiction}, let us assume otherwise. Let $z$ be the \emph{point} of $P$ \emph{closest} to $a$ on the \emph{side} of $a$ not containing $\alpha$. Then, the interval $[z,a]$ occurs in both the loops $\alpha$ and $\beta$. Choose $k \in \mathbb{N}$, $k< q$ such that $f^k(z) = x$. Since, we have assumed $P =Q$, so $P$ is \emph{associated} with $\beta$; so by \emph{definition} of \emph{loop}, \emph{starting} at $z$ in $[z, a]$ belonging to the \emph{loop} $\beta$ and \emph{walking} along the \emph{loop} $\beta$ for $k$ \emph{steps}, we should \emph{land} at $x$. But, by construction of the \emph{loop} $\beta$, by \emph{walking} $k$ \emph{steps} from the interval $[z, a]$ along $\Delta$, we reach the interval $[y, a]$ and $x >_a y$. So, we arrive at a contradiction. So, $P \neq Q$. 
	
	Thus, $P$ \emph{forces} a \emph{cycle} $Q \neq P$ with the same \emph{over-rotation number} as $P$ contradicting the fact that $P$ is \emph{over-twist}. Hence, the result follows.

\end{proof}

\begin{theorem}\label{second}
	Let $f \in \mathcal{U}$ be of any modality $m$. Then, the number of concordant pieces of $f$ is atmost $m + 4$. 
\end{theorem}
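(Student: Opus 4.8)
The plan is to analyze the structure of the special set $S(f)$ in terms of the turning points (local extrema) of $f$, and to count how many times $S(f)$ can break into distinct components. Since $f$ has modality $m$, it has exactly $m$ turning points; together with the two endpoints $0,1$ and the fixed point $a$, these are the only ``special'' abscissae that can govern where $S(f)$ starts, stops, or changes character. Concretely, I would first observe that $S_1(f) = S(f)\cap[0,a]$ and $S_2(f) = S(f)\cap[a,1]$ can be described as follows: for each $x$, the chosen sibling $s(x)$ is the point nearest $a$ (on $x$'s side) with $f(s(x)) = f(x)$. So a point $p$ on, say, the right side belongs to $S_2(f)$ precisely when there is \emph{no} point strictly between $a$ and $p$ mapping to $f(p)$; equivalently, $p$ lies in a maximal monotone lap of $f$ that is ``visible from $a$'' and the portion of that lap between $a$ and $p$ is the part closest to $a$ attaining those values. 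The upshot is that $S_2(f)$ is a union of intervals whose endpoints are among $a$, the turning points of $f$ lying on the right of $a$, the endpoint $1$, and preimages of turning-point values — but these last are handled automatically because the definition of $s(x)$ selects the closest sibling.

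The key step is a careful bookkeeping argument: walking along $[a,1]$ away from $a$, track on which lap of $f$ the current point sits and whether that lap currently ``contributes'' to $S_2(f)$. A transition (a component of $S_2(f)$ beginning or ending) can happen only at a turning point of $f$ on the right side, or where the branch stops being the closest sibling — and the latter, I claim, also occurs only at abscissae determined by turning points. If $m_2$ is the number of turning points to the right of $a$ and $m_1$ the number to the left, then $S_2(f)$ has at most $m_2 + 1$ components and $S_1(f)$ at most $m_1 + 1$ components, giving $m_1 + m_2 + 2 = m + 2$ in total. I would then account for the worst case where the components adjacent to $a$ on the two sides must be counted separately (they need not join up across $a$), and where the endpoints $0$ and $1$ each force at most one extra component beyond the naive count — pushing the bound up to $m + 4$. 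The slack between $m+2$ and $m+4$ is exactly the ``boundary effects'' at $0$, $a$, and $1$, and I would enumerate these cases explicitly rather than optimize.

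The main obstacle I anticipate is proving rigorously that the only places $S_2(f)$ can gain or lose a component are at turning points (plus the boundary) — in other words, ruling out the scenario where, as $p$ moves outward along a \emph{monotone} lap, the point $p$ silently switches from being a closest sibling to not being one, or vice versa, at some ``generic'' interior point. To handle this I would argue that on a fixed monotone lap $J$, the value $f(p)$ moves monotonically, so the set of $p\in J$ for which a closer sibling exists is determined by comparing the value-range of $f$ restricted to $[a,\cdot\,]\cap(\text{laps closer to }a)$; since each of those closer laps contributes its own monotone value-interval, the comparison can only change at endpoints of those intervals, i.e. at turning points or at $a$. This reduces the problem to a finite combinatorial comparison of value-intervals indexed by laps, which has at most $m+1$ laps total, and the count follows. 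Once this local claim is established, the global count is just a summation.
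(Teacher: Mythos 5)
Your proposal is correct and follows essentially the same route as the paper: both arguments show that the endpoints of the concordant pieces on each side of $a$ must come from the set of critical points on that side together with the boundary points, and then count at most $k+1$ left pieces and $\ell+1$ right pieces, giving $m+2$. Note that the extra slack you add for ``boundary effects'' to reach $m+4$ is unnecessary, since $0$ and $1$ are already absorbed into the $m_1+1$ and $m_2+1$ counts; the paper's own proof likewise concludes with the sharper bound $m+2$, which is the bound actually used later in Theorem~\ref{bound:final}.
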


\begin{proof}	
	To prove the result let us introduce a few notations. Let $c_1, c_2, \dots c_{k}$ be the \emph{critical points} of $f$ in $[0,a]$  and $d_1, d_2, \dots d_{\ell}$ be the \emph{critical points} of $f$ in $[a,1]$, $\ell = m-k$.  Let for each $ i \in {1,2, \dots k}$, $c_i'$ be the \emph{sibling} of $c_i$ \emph{nearest} to $c_i$, to the \emph{left} of $c_i$. Similarly, for each $ j \in {1,2, \dots \ell}$, $d_j'$ be the \emph{sibling} of $d_j$ \emph{nearest} to $d_j$, to the \emph{right} of $d_j$. Also, let $a'$ be the \emph{pre-image} of $a$ to the \emph{left} of $a$ \emph{nearest} to $a$ and let $a''$ be the \emph{pre-image} of $a$ to the \emph{right} of $a$ \emph{nearest} to $a$. Define, $\mathcal{I}_1 = \{ c_i : i=1,2, \dots k \}  \cup \{  0\}$, $ \mathcal{I}_2 = \{ c_i': i=1,2, \dots k \}  \cup \{ a,  a' \} $, $\mathcal{I}_3 = \{ d_i' : i=1,2, \dots $ $ \ell  \}  \cup \{  a, a'' \} $ and $\mathcal{I}_4 = \{ d_i: i=1,2, \dots \ell \} \cup \{ 1 \}$.

 Now, let $[s,t]$ be a \emph{left concordant piece} of $f$. Then, it is easy to see that $s \in \mathcal{I}_1$ and $t \in \mathcal{I}_2$ (See Figure \ref{drawing3}). Similarly, if $[u,v]$ be a \emph{right concordant piece} of $f$, then,  $u \in \mathcal{I}_3$ and $v \in \mathcal{I}_4$. Now, \emph{concordant pieces} by definition are disjoint,  each \emph{left concordant piece} can utilize a \emph{distinct} element of $\mathcal{I}_1$ and each  \emph{right concordant piece} can use a \emph{distinct} element of $\mathcal{I}_4$, it follows that, number of \emph{concordant pieces} of $f$ is \emph{atmost}  $(k+1) + (\ell + 1) = k + \ell + 2 = m+2$.  
\end{proof}

\begin{definition}\label{i:green}
	
	Let $f \in \mathcal{U}$ be of any \emph{modality} $m$. Let $P$ be a \emph{periodic orbit} of $f$. 
	
	\begin{enumerate}

		\item A \emph{point} $x \in P$ is called \emph{i-green} if $x$ is the \emph{image} of a \emph{green point}.

		\item A \emph{point} $x \in P$ is called \emph{i-black} if $x$ is the \emph{image} of a \emph{black point}. 
		
	\end{enumerate}
	
\end{definition}

Observe that each \emph{point} of $P$ is either \emph{i-green} or \emph{i-black}. $P$ is \emph{union} of \emph{set} of \emph{consecutive} \emph{i-green} and \emph{i-black points} called \emph{i-green} and \emph{i-black islands} respectively. We shall use the name \emph{i-island} to refer to \emph{both} an \emph{i-black} and an \emph{i-green} island.

\begin{lemma}\label{first:island}
	Let $f \in \mathcal{U}$ be of any modality $m$. Let $P$ be an over-twist periodic orbit of $f$. Then, the first point of $P$ (in the spatial labeling) and the last point of $P$ (in the spatial labeling)  must be $i$-black. 
\end{lemma}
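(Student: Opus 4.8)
The plan is to argue by contradiction in much the same spirit as the proof of Theorem~\ref{first}: if the extremal point of $P$ is $i$-green, we build a shorter admissible loop that still forces a cycle of the same over-rotation number, contradicting that $P$ is over-twist. First I would fix notation: let $0 = x_1 < x_2 < \dots < x_q = 1$ be the spatial labeling of $P$, and suppose for contradiction that $x_1$ is $i$-green (the case of $x_q$ being symmetric under reflection). Then $x_1$ is the image of a green point $g \in P$, say $f(g) = x_1$. Since $g$ is green, $g$ lies on the same side of $a$ as $f(g) = x_1$; but $x_1 = 0$ is the leftmost point of the whole interval, so $g$ lies in $[0,a]$ as well, and $x_1 = 0 \le_a$-extremal forces $x_1 \ge_a g$ is impossible unless $g = x_1$. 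Actually the cleaner observation is: $f(g) = 0$ means $0$ is in the image, and $g$ being on the left side with $0$ the farthest-left point gives a constraint I would exploit — namely that $g$ then has a sibling strictly closer to $a$, OR that the loop through $[x_1,a]$ can be bypassed.

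The key steps, in order: (1) Recall from Theorem~\ref{first} that $P \subseteq S(f)$, so every point of $P$ is of the form $s(x)$, the sibling closest to $a$; in particular no point of $P$ admits a sibling strictly between it and $a$. (2) Consider the fundamental admissible loop $\alpha : [x_1,a] \to [f(x_1),a] \to \dots \to [f^{q-1}(x_1),a] \to [x_1,a]$ associated with $P$ (taking $x = x_1$). Since $x_1$ is $i$-green, its preimage $g$ along the cycle is green, so $f$ maps the green point $g \in [0,a]$ to $x_1 \in [0,a]$, and by monotonicity/greenness considerations the $f$-covering relation $[g,a] \xrightarrow{f} [x_1,a]$ can be replaced: because $x_1 = 0$ is extremal, $[g,a]$ actually $f$-covers a strictly larger interval than $[x_1,a]$, and more importantly one can show the interval $[x_2, a]$ (the next one in from the edge) is also $f$-covered from within the loop, allowing us to splice out the step landing at $[x_1,a]$ and obtain a strictly shorter admissible loop $\beta$. (3) Apply Lemma~\ref{ALM2} to $\beta$ to get a cycle $Q$; check that $\orn(Q) = \orn(P)$ because the green/black signature along $\beta$ is inherited from $\alpha$ (deleting a step at an edge interval that is $f$-covered by a green point does not change the count of sign changes). (4) Show $P \neq Q$: since $\beta$ is strictly shorter than $\alpha$, the period of $Q$ is at most $q-1 < q$, so $Q$ cannot be $P$ (over-rotation pair of an over-twist pattern has coprime entries, and $\orn$ is fixed, so equal over-rotation numbers with smaller period is outright impossible — or more simply, $|Q| < |P|$). (5) Conclude that $P$ forces a distinct cycle $Q$ with the same over-rotation number, contradicting over-twistness; hence $x_1$ is $i$-black, and by the reflected argument so is $x_q$.

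The main obstacle I anticipate is step (2): making precise why $i$-greenness of the extremal point $x_1 = 0$ lets one genuinely shorten the loop rather than merely replace one interval by another of comparable size. The argument in Theorem~\ref{first} replaced $[x,a]$ by a shorter $[y,a]$ using a sibling closer to $a$; here that move is unavailable because $P \subseteq S(f)$ already. Instead I would need to use that the image point $x_1$ is an endpoint of $[0,1]$: a green point $g \in [0,a]$ with $f(g) = 0$ sits at (or past) a critical point where $f$ attains its minimum value $0$, so $[g,a]$ $f$-covers $[0,a] = [x_1,a]$ with room to spare, and in fact $[g',a]$ for the appropriate neighbor $g'$ of $g$ in $P$ already $f$-covers $[x_2,a]$; this lets the loop ``skip'' the edge interval. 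Verifying that this splicing preserves the over-rotation count — i.e. that we are not accidentally deleting a sign change — is the delicate bookkeeping, and I would handle it by tracking which steps of the loop are "green steps" (same side) versus "black steps" (opposite side) and showing the deleted step is a green step adjacent to another green step, so the parity of sign changes is unaffected.
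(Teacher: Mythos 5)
Your proposal commits to the wrong route and leaves its key step unproven, while the actual argument is a one\mbox{-}liner that you brush against in your first paragraph and then abandon. The paper's proof uses only that $f\in\mathcal{U}$ has a unique fixed point $a$: hence $f(x)>x$ for every $x<a$ and $f(x)<x$ for every $x>a$. If $x_1$ (the leftmost point of $P$) were the image of a green point $g$, then $g$ lies on the same side of $a$ as $f(g)=x_1$, so $g<a$ and therefore $x_1=f(g)>g\geq x_1$, a contradiction; so the preimage of $x_1$ in the cycle lies to the right of $a$ and is black. The symmetric argument handles $x_q$. Note that over-twistness of $P$ is not needed at all. Your opening paragraph gets as far as ``$g$ lies in $[0,a]$'' but never invokes the inequality $f(g)>g$ coming from uniqueness of the fixed point, which is the whole content of the lemma; instead you declare the ``cleaner observation'' to be a sibling/loop-bypass argument and switch tracks.

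The loop-splicing argument you then develop has two concrete defects. First, step (2) --- that $i$-greenness of $x_1$ lets you delete a step of the fundamental admissible loop and still have an admissible loop --- is exactly the step you admit you cannot make precise, and the sibling-replacement trick of Theorem~\ref{first} is unavailable since $P\subseteq S(f)$; so the construction of $\beta$ is not actually carried out. Second, even granting $\beta$, step (3) is false as stated: shortening the loop from length $q$ to length $q-1$ changes the period of the associated cycle, so even if the count of sign changes were preserved the over-rotation number would become $m/(2(q-1))\neq m/(2q)$. Without $\orn(Q)=\orn(P)$ there is no contradiction with over-twistness, so the whole strategy collapses. The fix is simply to drop the loop machinery and write out the two-line argument above.
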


\begin{proof}
Since $f \in \mathcal{U}$, the \emph{points} of $P$ to the \emph{left} of $a$ are \emph{mapped} to the \emph{right} of \emph{itself} and the points of $P$ to the \emph{right} of $a$ are \emph{mapped} to the \emph{left} of \emph{itself} and hence the result follows. 

\end{proof}

\begin{theorem}\label{fifth}
	Let $f \in \mathcal{U}$ be of any modality $m$. Let $P$ be an over-twist periodic orbit of $f$. Then, the sum of the number of i-green island and i-black islands in each side of $a$ is atmost $m+1$. 
\end{theorem}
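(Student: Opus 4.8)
The plan is to fix one side of $a$ — say the right side, the left side being entirely symmetric — and to count along that side the positions where $P$ switches between \emph{i}-green and \emph{i}-black points; a side carrying $t$ such \emph{color changes} is the union of exactly $t+1$ \emph{i}-islands, so it suffices to prove there are at most $m$ color changes on the right side. List $P\cap(a,1]=\{p_1<\dots<p_r\}$ and, for $p\in P$, let $\phi(p)$ be the unique point of $P$ with $f(\phi(p))=p$ (unique since $f|_P$ is a cyclic permutation). A point $p>a$ of $P$ is \emph{i}-green iff $\phi(p)>a$ and \emph{i}-black iff $\phi(p)<a$: indeed $p$ is the image of a \emph{green point} exactly when $\phi(p)$ is green, and since $f(\phi(p))=p>a$ this happens precisely when $\phi(p)$ lies on the same side of $a$, i.e.\ $\phi(p)>a$. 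Since $P$ is over-twist it is green (Theorem \ref{nec:green}), so $f|_{\mathcal{G}(P)}$ is increasing and $f|_{\mathcal{B}(P)}$ is decreasing; hence $\phi$ is order preserving on the \emph{i}-green points of the right side and order reversing on the \emph{i}-black points.

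Next I would charge each color change to a critical point of $f$. Suppose $p_j,p_{j+1}$ is a color change on the right, say $p_j$ \emph{i}-green and $p_{j+1}$ \emph{i}-black (the opposite case being symmetric). Then $\phi(p_{j+1})<a<\phi(p_j)$, while $f(\phi(p_j))=p_j>a$ and $f(\phi(p_{j+1}))=p_{j+1}>a$; as $a\in(\phi(p_{j+1}),\phi(p_j))$ and $f(a)=a<\min(p_j,p_{j+1})$, the minimum of $f$ over $[\phi(p_{j+1}),\phi(p_j)]$ is $\le a$ and is attained at an interior point, so $f$ has a local minimum $c_j\in(\phi(p_{j+1}),\phi(p_j))$. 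Thus every color change on the right side yields a local minimum of $f$, and by the mirror argument every color change on the left side yields a local maximum. It then remains to choose the $c_j$'s so that distinct color changes receive distinct critical points; since $f$ has exactly $m$ critical points, this gives at most $m$ color changes on the right and hence at most $m+1$ \emph{i}-islands there. For distinctness I would use the monotonicity of $\phi$ on \emph{i}-islands from the first paragraph: writing the color-change positions in order $j_1<j_2<\dots$, the endpoints $\phi(p_{j_i})$ and $\phi(p_{j_i+1})$ are, on each side of $a$, the extreme $\phi$-values of the two \emph{i}-islands flanking $j_i$, and comparing these across successive color changes shows the intervals $(\phi(p_{j_i+1}),\phi(p_{j_i}))$ (suitably oriented) are nested; taking $c_{j_i}$ to be, say, the local minimum of $f$ nearest to $a$ inside its interval then forces the assignment to be injective.

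I expect the injectivity step to be the real obstacle, for two reasons. First, turning "nested intervals $+$ nearest local minimum" into genuinely distinct critical points needs a careful case analysis of the alternating color pattern along the side and of ties in the minimum. Second, there is the degenerate possibility that $a$ itself is a critical point of $f$, in which case $[\phi(p_{j+1}),\phi(p_j)]$ may contain no local minimum of $f$ other than $a$, so several color changes could a priori all be charged to $a$; this case must be excluded separately, and here I would invoke $P\subseteq S(f)$ (Theorem \ref{first}) and argue as in the proof of that theorem — rerouting the fundamental admissible loop of $P$ through a sibling of the offending point closer to $a$ to produce a distinct cycle with the same over-rotation number, contradicting the over-twistness of $P$.
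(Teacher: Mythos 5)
Your setup is sound and, up to the point where a critical point must be produced for each color change, it matches what the paper does: the reduction to counting color changes, the characterization of \emph{i}-green versus \emph{i}-black on the right side by which side of $a$ contains $\phi(p)$, the monotonicity of $\phi$ on each color class, and the intermediate-value argument producing an interior local minimum of $f$ in the interval $[\phi(p_{j+1}),\phi(p_j)]$ straddling $a$ are all correct. The gap is exactly where you suspected it, but it is not a matter of ``careful case analysis'': the injectivity mechanism you propose cannot work. As you yourself observe, the intervals $J_1\subset J_2\subset\dots\subset J_{s-1}$ attached to successive color changes on a given side are nested and all contain $a$. A single local minimum $c$ of $f$ lying in $J_1$ therefore lies in every $J_k$, and nothing prevents $c$ from being the nearest-to-$a$ local minimum in every $J_k$; the rule ``take the local minimum nearest to $a$'' is biased toward choosing the \emph{same} point each time, so all $s-1$ color changes may be charged to one turning point. (This has nothing to do with $a$ being a critical point; it already fails when $f$ has one interior local minimum serving all the $J_k$.) To extract $s-1$ distinct critical points you must locate them in pairwise disjoint regions, which here means showing that each difference $J_{k+1}\setminus J_k$ --- two intervals, one on each side of $a$: the convex hull of the preimage of the island separating the two color changes, and the gap between the preimages of the two islands of the other color flanking it --- contains a turning point of its own. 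That step is the entire content of the theorem and it is absent from your sketch; establishing it requires tracking the points of $P$ that map to the \emph{other} side of $a$ (or some further input beyond greenness of $P$), none of which you use.

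For comparison, the paper's proof sidesteps the nesting problem by attaching a critical point to each gap $[b_i,g_i]$, $[g_i,b_{i+1}]$ \emph{between consecutive islands on the image side}; those gaps are pairwise disjoint, so once a critical point (or critical value) is located in each of them, distinctness is automatic. That disjointness is precisely what your nested domain-side intervals fail to provide. Your closing discussion of the degenerate case where $a$ is critical, and the rerouting of the fundamental admissible loop, does not address this: the count collapses for the reason above even when $a$ is a regular point of $f$.
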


\begin{proof}
	Let $n_b$ and $n_g$ be the \emph{number} of \emph{i-black} and \emph{i-green} islands in $[0,a]$. By Lemma \ref{first:island},  the \emph{first i-island} in $[0,a]$ must be \emph{i-black}. Let the \emph{i-islands} in $[0,a]$  be ordered as: $B_1 < G_1 < B_2 < G_2 < B_3 < G_3 < \dots < a$ with $B_i$,  $i=1,2, \dots n_b$  and $G_j$, $j=1,2,\dots n_g$ \emph{representing} \emph{i-black} and \emph{i-green islands} respectively. Choose $b_i \in B_i$, $i=1,2, \dots n_b$ and $g_j \in G_j$ for $j=1,2,\dots n_g$. Then, by Theorem \ref{nec:green}, $f^{-1}(g_1) < f^{-1}(g_2) < \dots  < b_1 < g_1 < b_2 < g_2 < \dots  < a  < \dots < f^{-1}(b_2)< f^{-1}(b_1)$. 
	
	It is easy to see that $f^{-1}([b_i, g_i])$ is \emph{not connected}  for all $i=1,2, \dots n_b$. Indeed, $f^{-1}([b_i, g_i])$ has at-least $2$ \emph{components} one to the \emph{left} of $a$ and the other to the \emph{right} if $a$. So, $f|_{[b_i, g_i]}$ is \emph{not monotone} for all $i=1,2, \dots n_b$ and there exists a \emph{critical point} of $f$ in $[b_i, g_{i}]$ for all $i=1,2, \dots n_b$. Similarly,  there exists a \emph{critical point} of $f$ in $[g_i, b_{i+1}]$ for all $i=1,2, \dots n_g$.  It follows that $f|_{[0,a]}$ has \emph{at-least} $n_b+ n_g -1$ \emph{critical points}. So, $n_b+ n_g -1 < m$ and hence $n_b + n_g < m+1$. The \emph{case} for the \emph{interval} $[a,1]$ is similar.

\end{proof}

\begin{figure}[H]
	\caption{\emph{Dynamics} of $i$-\emph{green} \emph{islands}: $G_1$ and $G_2$ and $i$-\emph{black} \emph{islands} $B_1$ and $B_2$}
	\centering
	\includegraphics[width=0.9 \textwidth]{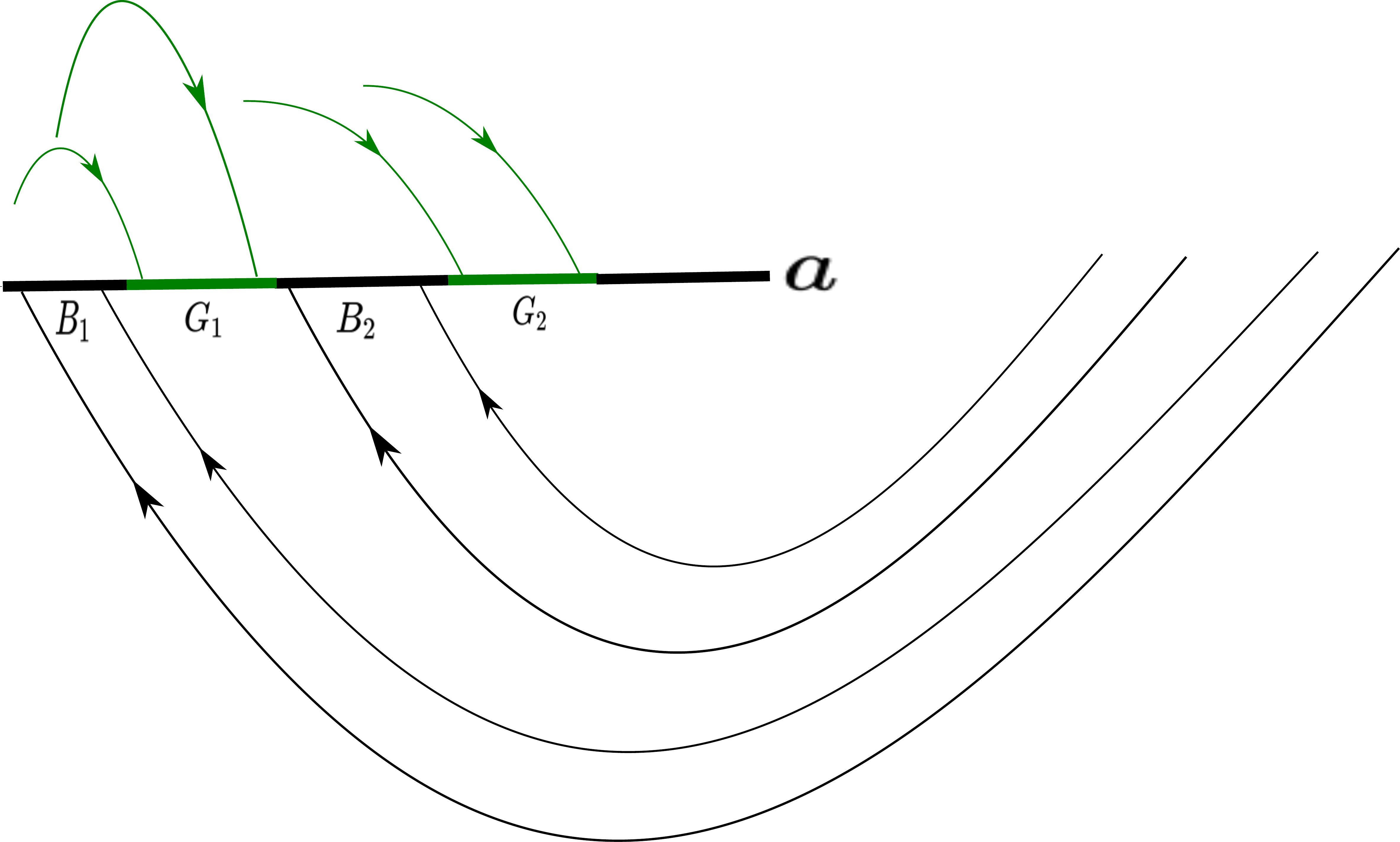}
	\label{drawing4}
\end{figure} 

\begin{theorem}\label{bound:final}
	Let $P$ be  an over-twist periodic orbit of modality $m>0$ and suppose $f\in \mathcal{U}$ be a $P$-linear map. Then, $f|_P$ is conjugate to an $n-IET$ where $n \leqslant (m+2)(m+1)$ restricted on one of its cycles. 
\end{theorem}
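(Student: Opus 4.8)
The plan is to deduce the statement from Theorem~\ref{P:linear:IET}. That theorem reduces the problem to a purely combinatorial task: exhibit a partition of the over-twist cycle $P$ into at most $(m+2)(m+1)$ consecutive \emph{blocks} $K_1<K_2<\dots<K_n$ such that $f|_{K_i}$ is monotone and maps without expansion for every $i$ (the count $k$ of decreasing blocks is recorded automatically by the construction, and the conclusion — an $n$-IET with $n\le(m+2)(m+1)$ and a cycle $Q$ with $f|_P$ conjugate to $T|_Q$ — then follows verbatim). So the whole argument is the construction of this partition.

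I would build it in two stages. \emph{First stage.} Since $P$ is over-twist, $f\in\mathcal U$ has a unique fixed point $a$ (Corollary~\ref{conv}), $P$ is green (Theorem~\ref{nec:green}), and $P\subseteq S(f)$ (Theorem~\ref{first}). By Theorem~\ref{second} the special set $S(f)$ has at most $m+2$ concordant pieces, and since each concordant piece is an interval, intersecting it with $P$ cuts $P$ into at most $m+2$ consecutive sub-lists. I would record as a short lemma that $f$ is injective on each of $S_1(f)$ and $S_2(f)$ — two points of $S_j(f)$ with the same image are siblings on the same side of $a$, hence both equal the closest-to-$a$ sibling of their common value, hence coincide — and deduce that $f$ is monotone on every concordant piece (an injective continuous map on an interval). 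Thus these ``macro-blocks'' already satisfy the monotonicity half of condition~(1) of Theorem~\ref{P:linear:IET}.

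\emph{Second stage.} Refine each macro-block $P\cap A$ so that $f$ maps each piece without expansion. Working inside a concordant piece $A$ on which $f$ is, say, increasing, $f(P\cap A)$ is an increasing sub-list of $P$, and a new sub-block must start precisely where $f(P\cap A)$ jumps over a maximal run of points of $P$ missing from $f(P\cap A)$. Using injectivity of $f$ on each of $S_1(f),S_2(f)$ (and hence the disjointness of the images of distinct concordant pieces on a given side of $a$), one checks that every such skipped run has all of its $P$-preimages on the side of $a$ opposite to $A$, and that it is in fact exactly one full $i$-island of $P$ — it is flanked in $P$ by elements of $f(P\cap A)$, which are images of points of $A$ and so of the other orientation type, ruling out any further extension; consequently distinct skipped runs inside $A$ lie in distinct $i$-islands. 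Since by Theorem~\ref{fifth} each side of $a$ carries at most $m+1$ $i$-islands, each $P\cap A$ is cut into at most $m+1$ sub-blocks, and doing this inside each of the at most $m+2$ macro-blocks yields a partition into $n\le(m+2)(m+1)$ blocks; feeding it into Theorem~\ref{P:linear:IET} completes the proof.

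The step I expect to be the main obstacle is the $i$-island accounting of the second stage. The image $f(A)$ of a concordant piece may straddle $a$, so one has to follow carefully how $f(P\cap A)$ weaves among the $i$-green and $i$-black islands on \emph{both} sides of $a$, verify that each ``skip'' is an entire $i$-island rather than a fragment of one, and — in order to land exactly on the constant $(m+2)(m+1)$ — show that no single concordant piece contributes more than $m+1$ sub-blocks. Greenness of $P$ (Theorem~\ref{nec:green}), which forces $f$ increasing on green points and decreasing on black points, is precisely what keeps the island/orientation bookkeeping consistent, and it is also the link between the modality $m$ (the number of critical points) and the island count supplied by Theorem~\ref{fifth}.
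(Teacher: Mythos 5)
Your proposal follows essentially the same route as the paper: decompose $P$ by the concordant pieces of $S(f)$ (Theorems \ref{first} and \ref{second} give at most $m+2$ of them), refine each piece according to the i-islands counted in Theorem \ref{fifth} (at most $m+1$ sub-blocks each), and feed the resulting monotone, expansion-free blocks into Theorem \ref{P:linear:IET}. Your ``skipped runs'' are precisely the complements of the paper's maximal i-green/i-black blocks $B_{ij}$ of $f(K_i\cap P)$, so the two refinements produce the same partition and the same bound $(m+2)(m+1)$.
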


\begin{proof}
	
Let $K_1, K_2, \dots K_p$ be the \emph{concordant pieces} of $f$. By Theorem \ref{first}, $P \subseteq \displaystyle \bigcup_{i=1}^p K_i$.  For each $i \in \{1,2, \dots p\}$, observe that $K_i \cap P$ consists of \emph{consecutive} \emph{green points} or \emph{consecutive} \emph{black points}.  We \emph{partition} $f(K_i \cap P)$ into \emph{maximal blocks} $B_{ij}$, $j \in \{1,2,  $ $ \dots n_i\}$ of \emph{consecutive} \emph{i-green points} and \emph{consecutive} \emph{i-black points}. It follows that the \emph{union} of the $B_{ij}$s, $ i=1,2, \dots p$ and $j=1,2,\dots n_i$ \emph{yields} $P$.

 Let $Z_{ij} = f^{-1}(B_{ij})$ for all $ i=1,2, \dots p$ and $j=1,2,\dots n_i$. Then, from Theorem \ref{nec:green}, $Z_{ij}s$ represent \emph{set} of \emph{consecutive} \emph{green points} or \emph{set} of \emph{consecutive} \emph{black points} and they are mapped onto $B_{ij}$s \emph{homeomorphically}. Hence, the \emph{union} of $Z_{ij}$s, $ i=1,2, \dots p$ and $j=1,2,\dots n_i$ also yields the \emph{set} $P$,  $f|_{Z_{ij}}$ is \emph{monotone} and maps without any \emph{expansion}. Thus, from Theorem \ref{P:linear:IET}  $f|_P$ is \emph{conjugate} to an IET with \emph{intervals of isometry}:  $Z_{ij},   i=1,2, \dots p, j=1,2,\dots n_i $, \emph{restricted} on one of its \emph{cycles}. 
	
	It remains to show that the number of such \emph{sets} $Z_{ij}$, $ i=1,2, \dots p$ and $j=1,2,\dots n_i$ is \emph{bounded} by some \emph{function} of $m$. By Theorem \ref{second},  $ p \leqslant m+2$. Also, observe that each such \emph{block} $B_{ij}$, is \emph{contained either} in a \emph{distinct} \emph{i-green} \emph{island} or an \emph{i-black island}. Thus, $n_i$ is the \emph{precisely} equal to \emph{sum} of the \emph{number} of \emph{i-green} and \emph{i-black} \emph{island} which \emph{intersects} $f(K_i \cap P)$. So, by  Theorem \ref{fifth}, $n_i <  m+1$. Hence, the \emph{result} follows. 
\end{proof}

 Description of the \emph{dynamics} of all possible  \emph{unimodal} and \emph{bimodal over-twist patterns} were obtained in \cite{BS} and \cite{BB2} respectively.  Using these we can obtain the \emph{exact} number of \emph{segments} of \emph{isometry} in the \emph{unimodal} and \emph{bimodal} case. We refer to the following result from \cite{BB2}: 

\begin{theorem}[\cite{BB2}]\label{des:bim}
	Given any rational over-rotation  number $\frac{p}{q}$, where $p$ and $q$ are co-prime,  there are $q-2p+1$ distinct over-twist patterns:  $\Gamma_{r, \frac{p}{q}}$, $r = 0, 1, 2, \dots, q-2p $ of over-rotation number $\frac{p}{q}$ and modality less than or equal to $2$. Let $P_{r, \frac{p}{q}}$ be a cycle,  with its elements spatially labeled as $x_1, x_2, \dots, x_q$, which exhibits $\Gamma_{r, \frac{p}{q}}$  and let $f$ be a $P_{r, \frac{p}{q}}$-linear map. Then, $f|_{P_{r, \frac{p}{q}}}$ acts as follows: 

\begin{enumerate}
	\item  The first $r$ points, $x_1, x_2, \dots x_r$ of $P_{r, \frac{p}{q}}$ are shifted to the right by $p$ points, that is, $f(x_i) = x_{i+p}$ for $i=1,2, \dots r$. 
	
	\item 	The next $p$ points of $P_{r, \frac{p}{q}}$: $x_{r+1}, x_{r+2}, \dots x_{r+p}$
	map onto the last $p$ points  of $P_{r, \frac{p}{q}}$  with a flip, that is, the  orientation is 
	reversed but without any expansion, that is, $f(x_{r+i}) = x_{q-i+1}$ for $i=1,2, \dots p$. 
	
	\item The next $p$ points of $P_{r, \frac{p}{q}}$: $x_{r+p+1}, x_{r+p+2}, \dots x_{r+2p}$
	map onto the first $p$ points of $P_{r, \frac{p}{q}}$   with a flip, that is, the  orientation is 
	reversed but without any expansion, that is, $f(x_{r+p+i}) = x_{p-i+1}$ for $i=1,2, \dots p$. 
	
	\item The last $s=q-2p-r$ points of $P_{r, \frac{p}{q}}$: $x_{r+2p+1}, x_{r+2p+2}, \dots x_{q}$are shifted to the left by $p$ points, that is, $f(x_{r+2p+i}) = x_{r+p+i}$ for $i=1,2, \dots q-2p-r$. 
\end{enumerate}

	\end{theorem}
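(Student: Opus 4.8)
The plan is as follows. By Corollary \ref{conv} the pattern $\Gamma_{r,\frac{p}{q}}$ is convergent, so a $P_{r,\frac{p}{q}}$-linear map $f$ has a unique fixed point $a$, with $f(x)>x$ for $x<a$ and $f(x)<x$ for $x>a$; and by Theorem \ref{nec:green} the pattern is green, i.e. $f|_{\mathcal{G}(P)}$ is increasing and $f|_{\mathcal{B}(P)}$ is decreasing. Since the pattern is over-twist its over-rotation pair is $(p,q)$ with $\gcd(p,q)=1$; the points of the orbit where $f(x)-x$ and $f^2(x)-f(x)$ change sign are precisely the black points (as computed in Section \ref{ss:over:number}), so there are $2p$ black points, and because $f$ is a bijection of $P$ sending black-left points to the right and black-right points to the left, exactly $p$ black points lie in $[0,a]$ and $p$ in $[a,1]$. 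Writing $r$ for the number of green points of $P$ in $[0,a]$, there are then $q-2p-r$ green points in $[a,1]$, so $r\in\{0,1,\dots,q-2p\}$, which already bounds the number of candidate patterns by $q-2p+1$.

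The core of the proof is to show that, when $P$ is over-twist of modality at most $2$, the colour-word of $P$ read along $x_1<x_2<\dots<x_q$ must be $\mathtt{g}^{\,r}\mathtt{b}^{\,p}\mathtt{b}^{\,p}\mathtt{g}^{\,q-2p-r}$, with $a$ lying between the two black blocks. Greenness fixes the sign of the slope of the $P$-linear map on each maximal monochromatic block of $P$ (positive on green, negative on black); moreover black-left points map into $[a,1]$ while black-right points map into $[0,a]$, so the two central black blocks fuse into a single descending run of $f$ across $a$, whereas every further colour alternation on a given side of $a$ produces a turning point of $f$. Hence modality at most $2$ restricts the colour-word to the displayed one together with a short explicit list of degenerate words in which some black block on one side of $a$ is broken by a green block while the opposite side carries no green point at all. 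These degenerate words are eliminated by the loop-surgery argument used to prove Theorem \ref{first}: in each of them some point $x$ of a black block fails to be an innermost sibling, so replacing $[x,a]$ by $[s(x),a]$ in the fundamental admissible loop of $P$ yields (via Lemma \ref{ALM2}) an $f$-admissible loop whose associated cycle $Q\neq P$ has the same over-rotation number, contradicting over-twistness. Carrying out this case analysis — juggling modality at most $2$ against forcing-minimality — is the main obstacle.

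Once the colour-word is known, the action of $f$ on $P$ is essentially forced. The $f$-preimage of the $r+p$ leftmost points $\{x_1,\dots,x_{r+p}\}$ is exactly the left green block together with the right black block, on which $f$ is respectively increasing and decreasing; a further application of the loop-surgery principle shows that over-twistness forces $f$ to carry each monochromatic block onto a block of consecutive points of $P$ without expansion. There are then only two ways to split $\{x_1,\dots,x_{r+p}\}$ into an increasing $r$-block image and a decreasing $p$-block image, and in one of them every point of the left green block would be a fixed point of $f$ — impossible since $a\notin P$. Hence $f(x_i)=x_{i+p}$ for $i\le r$, and $f$ reverses the right black block onto $\{x_1,\dots,x_p\}$, i.e. $f(x_{r+p+i})=x_{p-i+1}$. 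The symmetric analysis of the $f$-preimage of the $q-p-r$ rightmost points forces $f$ to reverse the left black block onto the last $p$ points, $f(x_{r+i})=x_{q-i+1}$, and to carry the right green block leftwards by $p$, $f(x_{r+2p+i})=x_{r+p+i}$; this is precisely items (1)--(4).

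It remains to check that each of these $q-2p+1$ combinatorial types is realised and is genuinely over-twist. Building a piecewise-linear map directly from the prescriptions (1)--(4) gives a map in $\mathcal{U}$ of modality $2$ (of modality $1$ when $r=0$), whose unique cycle of over-rotation number $\frac{p}{q}$ one verifies to be $P_{r,\frac{p}{q}}$, e.g. by checking the Blokh--Misiurewicz over-twist criterion of \cite{BM2}; and the $\Gamma_{r,\frac{p}{q}}$ are pairwise distinct because the ordered pair recording the number of green points to the left and to the right of $a$ is an invariant of the pattern.
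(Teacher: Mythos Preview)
The paper does not prove Theorem~\ref{des:bim} at all: it is quoted verbatim from \cite{BB2} and used as a black box in the proof of Theorem~\ref{bimodal:IET:1}. There is therefore nothing in this paper to compare your argument against; any assessment has to be on its own merits or against \cite{BB2}.

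On its own merits, what you have written is a plausible outline but not a proof. The counting in your first paragraph is fine: convergence, greenness, and the fact that exactly $p$ black points lie on each side of $a$ are all correct and easy. The second paragraph is where the content lies, and you essentially say so yourself (``the main obstacle''). Two points deserve scrutiny. First, the claim that each colour alternation on a given side of $a$ forces a turning point of the $P$-linear map is true, but you also need to analyse what happens at the $P$-basic interval straddling $a$: a black--black adjacency there produces no extra critical point, while other adjacencies may, and this is exactly what singles out the word $\mathtt{g}^{r}\mathtt{b}^{p}\mid\mathtt{b}^{p}\mathtt{g}^{s}$ among the modality-$\le 2$ candidates. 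Second, the ``short explicit list of degenerate words'' that you propose to eliminate by loop-surgery is not written down, and the surgery argument you invoke (Theorem~\ref{first}) only tells you that $P\subset S(f)$; it does not by itself rule out, say, a pattern with colour word $\mathtt{b}^{p}\mathtt{g}^{r}\mid\mathtt{b}^{p}\mathtt{g}^{s}$ in which every point already is its own innermost sibling. Killing those cases is the actual work in \cite{BB2}, and it uses more than the single replacement $[x,a]\mapsto[s(x),a]$.

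Your third paragraph also hides a gap: knowing the colour word and that each block maps without expansion onto a block of consecutive points does not immediately pin down \emph{which} block of consecutive points; you still have to argue, for instance, that the image of the left black block is exactly $\{x_{q-p+1},\dots,x_q\}$ rather than some other length-$p$ run on the right. This follows, but it needs the bijectivity of $f|_P$ together with the images of the green blocks, and should be spelled out. The realisation and distinctness paragraph is fine as a plan.
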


\begin{remark}\label{r:0:case}
		Out of these $q-2p+1$ distinct \emph{over-twist patterns}:  $\Gamma_{r, \frac{p}{q}}$, $r = 0, 1, 2, \dots, q-2p $,  the \emph{patterns} corresponding to $r=0$ and $r=q-2p$ are \emph{unimodal} and the remaining $q-2p-1$ patterns are \emph{strictly bimodal}.
\end{remark}

\begin{theorem}\label{bimodal:IET:1}
Let $P$ be a cycle of $P$-linear map $f$ which exhibits an over-twist pattern with modality $m =1,2$ and let $f:[0,1] \to [0,1]$ be a $P$-linear map. Then, $f|_P$ is conjugate to an $(4,2) -\IET$ if $m=2$ and to a $(3,2) -\IET$ if $m=1$, restricted on one of its cycle. 
\end{theorem}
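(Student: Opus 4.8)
The plan is to specialize the general bound of Theorem~\ref{bound:final} using the explicit description of unimodal and bimodal over-twist cycles given in Theorem~\ref{des:bim}, rather than re-running the counting argument from scratch. First I would dispose of the unimodal case. By Remark~\ref{r:0:case}, a unimodal over-twist pattern is $\Gamma_{0,\frac{p}{q}}$ or $\Gamma_{q-2p,\frac{p}{q}}$; by the symmetry $x_i \mapsto x_{q+1-i}$ these two are mirror images, so it suffices to treat $r=0$. Reading off parts (2), (3), (4) of Theorem~\ref{des:bim} with $r=0$: the block $x_1,\dots,x_p$ maps (with a flip) onto $x_q,\dots,x_{q-p+1}$; the block $x_{p+1},\dots,x_{2p}$ maps (with a flip) onto $x_p,\dots,x_1$; and the block $x_{2p+1},\dots,x_q$ shifts left by $p$ (orientation preserving). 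Each of these three blocks is a maximal interval on which $f|_P$ is monotone and maps without expansion, with exactly two of them (the first two) decreasing. Invoking Theorem~\ref{P:linear:IET} with $n=3$, $k=2$ gives that $f|_P$ is conjugate to a $(3,2)$-IET restricted to one of its cycles. One should also check these three blocks cannot be merged into two — i.e. that $3$ is sharp — by noting that an $(n,k)$-IET restricted to a cycle realizing a unimodal pattern genuinely needs a flip, hence at least one decreasing segment on each ``side'', forcing at least three pieces; alternatively, since a $(2,k)$-IET that is a genuine bijection with a periodic orbit of over-rotation number in $(0,\tfrac12)$ would have to be a rotation-type map, which cannot carry this pattern. (A short ad hoc verification suffices here and I would keep it brief.)

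For the bimodal case I would do the same bookkeeping with general $r$, $1 \le r \le q-2p-1$ (the strictly bimodal range, again by Remark~\ref{r:0:case}). Now Theorem~\ref{des:bim} exhibits four blocks: $x_1,\dots,x_r$ shifted right by $p$ (increasing); $x_{r+1},\dots,x_{r+p}$ mapping onto $x_q,\dots,x_{q-p+1}$ with a flip (decreasing); $x_{r+p+1},\dots,x_{r+2p}$ mapping onto $x_p,\dots,x_1$ with a flip (decreasing); and $x_{r+2p+1},\dots,x_q$ shifted left by $p$ (increasing). Each block is monotone and expansion-free on $P$ — this is exactly the content of the ``without any expansion'' clauses in Theorem~\ref{des:bim} — and precisely two of the four blocks are decreasing. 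Applying Theorem~\ref{P:linear:IET} with $n=4$, $k=2$ yields that $f|_P$ is conjugate to a $(4,2)$-IET restricted to a cycle. Sharpness (that $4$ is not improvable to $3$ for a genuinely bimodal pattern) I would justify by observing that collapsing to three blocks would force two adjacent blocks of the above list to lie on one straight line; checking the four consecutive pairs, each such coincidence would either make the map non-bijective or reduce the modality to $1$, contradicting strict bimodality.

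The one genuine subtlety, and the step I expect to be the main obstacle, is reconciling the hypothesis of the theorem — a cycle ``which exhibits an over-twist pattern with modality $m=1,2$'' — with the normal form of Theorem~\ref{des:bim}, which is stated for the specific cycle $P_{r,\frac{p}{q}}$ exhibiting $\Gamma_{r,\frac{p}{q}}$. Since the conclusion (conjugacy to an IET restricted to a cycle) is an invariant of the pattern and not of the particular representative cycle — any two cycles exhibiting the same pattern are conjugate via the order isomorphism of their point sets, and this conjugacy intertwines the respective $P$-linear maps on those point sets — it is enough to verify the IET conclusion for one representative, namely $P_{r,\frac{p}{q}}$, and then transport it. I would state this reduction explicitly at the start of the proof. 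The remaining work is then the purely combinatorial block-counting described above, which is routine once the blocks are written down.

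\begin{proof}
By Corollary~\ref{conv} and Theorem~\ref{nec:green} we may assume $f \in \mathcal{U}$ and $P$ is green. Since conjugacy of $f|_P$ to an IET restricted to a cycle depends only on the pattern exhibited by $P$ (two cycles exhibiting the same pattern have order-isomorphic point sets, and the isomorphism conjugates the corresponding $P$-linear maps on those finite sets), it suffices to prove the statement for the distinguished cycle $P = P_{r,\frac{p}{q}}$ of Theorem~\ref{des:bim}, where by Remark~\ref{r:0:case} we take $r \in \{0, q-2p\}$ in the unimodal case and $r \in \{1,2,\dots,q-2p-1\}$ in the strictly bimodal case.

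Suppose first $m=2$, so $1 \le r \le q-2p-1$. Label the elements of $P$ as $x_1 < x_2 < \dots < x_q$. By Theorem~\ref{des:bim}, $P$ is the disjoint union of the four blocks
\[
K_1 = \{x_1,\dots,x_r\},\quad K_2 = \{x_{r+1},\dots,x_{r+p}\},\quad K_3 = \{x_{r+p+1},\dots,x_{r+2p}\},\quad K_4 = \{x_{r+2p+1},\dots,x_q\},
\]
with $K_1 < K_2 < K_3 < K_4$. On $K_1$ we have $f(x_i) = x_{i+p}$, so $f|_{K_1}$ is increasing and maps without expansion. On $K_2$ we have $f(x_{r+i}) = x_{q-i+1}$ for $i = 1,\dots,p$, so $f|_{K_2}$ is decreasing and maps without expansion (consecutive points go to consecutive points). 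On $K_3$ we have $f(x_{r+p+i}) = x_{p-i+1}$ for $i=1,\dots,p$, so $f|_{K_3}$ is decreasing and maps without expansion. On $K_4$ we have $f(x_{r+2p+i}) = x_{r+p+i}$, so $f|_{K_4}$ is increasing and maps without expansion. Exactly two of the four blocks, namely $K_2$ and $K_3$, are decreasing. By Theorem~\ref{P:linear:IET} with $n=4$ and $k=2$, there is a $(4,2)$-IET $T$ and a cycle $Q$ of $T$ with $f|_P$ conjugate to $T|_Q$.

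Now suppose $m=1$; by the mirror symmetry $x_i \leftrightarrow x_{q+1-i}$ it suffices to treat $r=0$. Then Theorem~\ref{des:bim} exhibits $P$ as the disjoint union of the three blocks
\[
K_1 = \{x_1,\dots,x_p\},\qquad K_2 = \{x_{p+1},\dots,x_{2p}\},\qquad K_3 = \{x_{2p+1},\dots,x_q\},
\]
with $K_1 < K_2 < K_3$. On $K_1$, $f(x_i) = x_{q-i+1}$ for $i=1,\dots,p$, so $f|_{K_1}$ is decreasing and maps without expansion. On $K_2$, $f(x_{p+i}) = x_{p-i+1}$ for $i=1,\dots,p$, so $f|_{K_2}$ is decreasing and maps without expansion. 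On $K_3$, $f(x_{2p+i}) = x_{p+i}$, so $f|_{K_3}$ is increasing and maps without expansion. Exactly two of the three blocks are decreasing, so by Theorem~\ref{P:linear:IET} with $n=3$ and $k=2$, $f|_P$ is conjugate to $T|_Q$ for a $(3,2)$-IET $T$ and a cycle $Q$ of $T$. This proves the result.
\end{proof}
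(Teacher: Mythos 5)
Your proof is correct and follows essentially the same route as the paper: partition $P$ into the blocks $\{x_1,\dots,x_r\}$, $\{x_{r+1},\dots,x_{r+p}\}$, $\{x_{r+p+1},\dots,x_{r+2p}\}$, $\{x_{r+2p+1},\dots,x_q\}$ given by Theorem~\ref{des:bim}, note that each is monotone and maps without expansion with exactly two decreasing, and invoke Theorem~\ref{P:linear:IET} (the paper handles the unimodal case by letting the first or last block be empty, where you instead treat $r=0$ and appeal to mirror symmetry). Your added remarks on sharpness and on reducing to the representative cycle $P_{r,\frac{p}{q}}$ go slightly beyond what the paper records but do not change the argument.
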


\begin{proof}
Suppose $P$  has \emph{over-rotation number} $\frac{p}{q}$ and its elements be spatially labeled as $x_1, x_2, \dots, x_q$. Choose $r \in \{ 0,1,2, $ $  \dots q-2p\}$ such that $P$ \emph{exhibits} the \emph{over-twist pattern}  $\Gamma_{r, \frac{p}{q}}$  with \emph{over-rotation number} $\frac{p}{q}$. We \emph{partition} $P$ into $4$ \emph{disjoint parts}: $P_1 = \{ x_1, x_2, \dots x_r\}$, $P_2 = \{x_{r+1}, x_{r+2}, \dots, x_{r+p}\}$,  $P_3 = \{x_{r+p+1}, $ $ x_{r+p+2},$ $ \dots, x_{r+2p}\}$, $P_4 = \{x_{r+2p+1}, $ $ x_{r+2p+2},$ $ \dots, x_{q}\}$. From Theorem \ref{des:bim}, $f|_{K_i}$, is \emph{monotone} for all \textbf{$i \in \{1,2,3,4\}$ } and \emph{maps} \emph{without any expansion}. The \emph{orientation} is \emph{preserved} in case of $P_1$ and $P_4$ and \emph{reversed} in case of $P_2$ and $P_3$. Also, from Remark \ref{r:0:case}, if $\card(P_1) =0$ or $\card(P_4) = 0$, then $P$ becomes \emph{unimodal}, otherwise $P$ becomes \emph{strictly bimodal}. Hence, by Theorem \ref{P:linear:IET}, the result follows.

\end{proof}

\begin{figure}[H]
	\caption{\emph{Partition} of the \emph{cycle} $P$ \emph{exhibiting} \emph{pattern} $\Gamma_{3, \frac{3}{11}}$ into $4$ sets: $P_1, P_2, P_3$ and $P_4$}
	\centering
	\includegraphics[width=1 \textwidth]{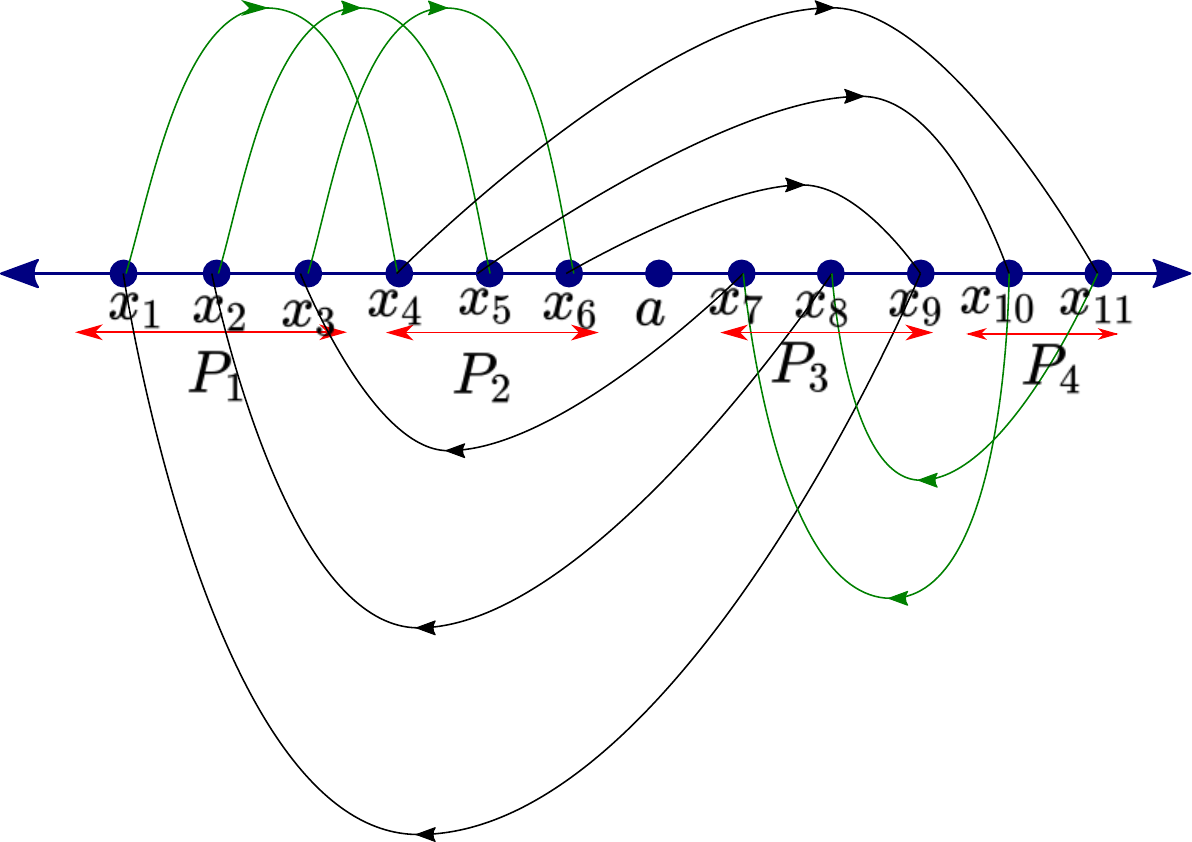}
	\label{drawing5}
\end{figure}


\begin{thebibliography}{99999999}
	
	
	






\bibitem{alm00} Ll. Alsed\`{a}, J. Llibre and M. Misiurewicz,
\emph{Combinatorial Dynamics and Entropy in Dimension One},
Advanced Series in Nonlinear Dynamics (2nd edition) \textbf{5} (2000),
World Scientific Singapore (2000)




\bibitem{Ba} S. Baldwin, \emph{Generalization of a theorem of
	Sharkovsky on orbits of continuous real valued functions,} Discrete
Math. \textbf{67} (1987), 111--127.





\bibitem{BB2} S. Bhattacharya, A. Blokh, \emph{Over-rotation intervals of bimodal interval maps},
Journal of Difference Equations and Applications \textbf{26} (2020), 1085-1113. 








\bibitem{BKM} Boldrighini.C, Keane.M, Marchetti.F, \emph{Billiards in polygons}, Ann. Probab. \textbf{6} (1978), 532-540. 



\bibitem{BS} A. Blokh, K. Snider, \emph{Over-rotation numbers for
	unimodal maps,} Journal of Difference Equations and Aplications
\textbf{19}(2013), 1108--1132.




\bibitem{blo95a} A. Blokh, \emph{The Spectral Decomposition for
	One-Dimensional Maps}, Dynamics Reported \textbf{4} (1995), 1--59.


\bibitem{B1} A. Blokh, \emph{On Rotation Intervals for Interval
	Maps}, Nonlinearity \textbf{7}(1994), 1395--1417.



\bibitem{B2} A. Blokh, \emph{Rotation Numbers, Twists and a
	Sharkovsky-Misiurewicz-type Ordering for Patterns on the Interval},
Ergodic Theory and Dynamical Systems \textbf{15}(1995), 1--14. 	




\bibitem{BM1} A. Blokh, M. Misiurewicz, \emph{A new order for
	periodic orbits of interval maps}, Ergodic Theory and Dynamical Sys.
\textbf{17}(1997), 565-574



\bibitem{BM2} A. Blokh, M. Misiurewicz, \emph{Rotating an interval
	and a circle}, Trans. Amer. Math. Soc. \textbf{351}(1999), 63--78.


\bibitem{BS} A. Blokh, K. Snider, \emph{Over-rotation numbers for
	unimodal maps,} Journal of Difference Equations and Applications
\textbf{19}(2013), 1108--1132.







\bibitem{EPS} Esp'in.J.G, Peralta.D, Soler.G, \emph{Existence of minimal flows on non orientable surfaces}, Discrete and Continuous Dynamical Systems, \textbf{37}(2017), 4191-4211


\bibitem{KM} Keane.M, \emph{Interval exchange transformations}, Math.Z. \textbf{141}(1975), 25-31.








\bibitem{NA} Nogueira.A, \emph{Almost all interval exchange transformations with flips are nonergodic}, Ergod. Theory Dynam. Syst. \textbf{9}(1989), 515-525




\bibitem{shatr} A. N. Sharkovsky, \emph{Coexistence of the
	cycles of a continuous mapping of the line into itself}, Internat.
J. Bifur. Chaos Appl. Sci. Engrg. \textbf{5}(1995), 1263--1273.

\bibitem{VM} Viana. M, \emph{Ergodic theory of interval exchange maps}, Rev. Mat. Complut \textbf{19}(2006), 7-100





\end{thebibliography}
\end{document}